\newcommand{\rar}[1]{\stackrel{#1}{\longrightarrow}}
\newcommand{\eps}{\epsilon}
\newcommand{\bF}{{\mathbb F}}
\newcommand{\bZ}{{\mathbb Z}}
\newcommand{\cA}{{\mathcal A}}
\newcommand{\cB}{{\mathcal B}}
\newcommand{\cC}{{\mathcal C}}
\newcommand{\cF}{{\mathcal F}}
\newcommand{\cM}{{\mathcal M}}
\newcommand{\cO}{{\mathcal O}}
\newcommand{\cS}{{\mathcal S}}
\newcommand{\idot}{{\:\raisebox{1pt}{\text{\circle*{1.5}}}}}
\newcommand{\colim}{\text{colim}}
\newcommand{\uR}{\underline{R}}
\newcommand{\td}{\tilde{d}}
\newcommand{\tA}{\widetilde{A}}
\newcommand{\tQ}{\widetilde{Q}}
\newcommand{\tR}{\widetilde{R}}
\newcommand{\tX}{\widetilde{X}}
\newcommand{\ttA}{\breve{A}}
\newcommand{\ttd}{\breve{d}}
\newcommand{\nc}{\newcommand}
\nc\wh{\widehat}
\nc\on{\operatorname}
\nc\Gr{\on{Gr}}
\nc\Fl{\on{Fl}}
\newtheorem{thm}[subsection]{Theorem}
\DeclareMathOperator{\gr}{{gr}}
 \DeclareMathOperator{\Spf}{{Spf}}
\DeclareMathOperator{\Mod}{{Mod}}
\DeclareMathOperator{\DGA}{{DGA}}
\DeclareMathOperator{\hocolim}{{hocolim}}
\DeclareMathOperator{\Sp}{{Sp}}
\newcommand{\limto}{{\displaystyle\lim_{\longrightarrow}}}
\newcommand{\rightlim}{\mathop{\limto}}
\newcommand{\leftlim}{\mathop{\displaystyle\lim_{\longleftarrow}}}
\newcommand{\limfromn}{\leftlim\limits_{\raise3pt\hbox{$n$}}}
\newcommand{\limton}{\rightlim\limits_{\raise3pt\hbox{$n$}}}
\newcommand{\rightlimit}[1]{\mathop{\lim\limits_{\longrightarrow}}\limits%
                    _{\raise3pt\hbox{$\scriptstyle #1$}}}
\newcommand{\leftlimit}[1]{\mathop{\lim\limits_{\longleftarrow}}\limits%
                    _{\raise3pt\hbox{$\scriptstyle #1$}}}
\newcommand{\iso}{\buildrel{\sim}\over{\longrightarrow}}
\DeclareMathOperator{\Hom}{{Hom}}
 \DeclareMathOperator{\id}{{id}}
\DeclareMathOperator{\Mor}{{Mor}}
\DeclareMathOperator{\Perf}{{Perf}}
\DeclareMathOperator{\Spec}{{Spec}}
\DeclareMathOperator{\oHH}{{\overline{HH}}}
\DeclareMathOperator{\wHH}{{\widehat{\overline{HH}}}}
\DeclareMathOperator{\HH}{{HH}}
\DeclareMathOperator{\tC}{{\tilde {\mathcal C}}}
\DeclareMathOperator{\hHP}{{\widehat{HP}}}
\DeclareMathOperator{\HP}{{HP}}
\DeclareMathOperator{\TC}{{TC}}
\DeclareMathOperator{\wMod}{{\widehat{Mod}}}
\DeclareMathOperator{\cris}{{cris}}
\DeclareMathOperator{\zar}{{zar}}
\DeclareMathOperator{\ev}{{ev}}
\DeclareMathOperator{\cone}{{cone}}
\DeclareMathOperator{\Sh}{{Sh}}
\DeclareMathOperator{\PSh}{{PSh}}
\DeclareMathOperator{\cofib}{{cofib}}
\DeclareMathOperator{\fib}{{fib}}
\newcommand{\HKR}{\mathrm{HKR}}
\newcommand{\Rmnum}[1]{\expandafter\@slowromancap\romannumeral #1@}
\newtheorem{Th}{Theorem}
\newtheorem{pr}{Proposition}[section]
\newtheorem{lm}[pr]{Lemma}
\newtheorem{cor}[pr]{Corollary}
\theoremstyle{definition}
\newtheorem{df}[pr]{Definition}
\newtheorem{rem}[pr]{Remark}
\numberwithin{equation}{section}
\newcommand{\TP}{\operatorname{TP}}
\newcommand{\hTP}{\operatorname{\widehat{TP}}}
\newcommand{\Fun}{\operatorname{Fun}}
\newcommand{\dR}{\mathrm{dR}}
\newcommand{\RG}{\mathrm{R}\Gamma}
\newcommand{\RGcr}{\mathrm{R}\Gamma_{\mathrm{cris}}}
\DeclareSymbolFontAlphabet{\mathbb}{AMSb}
\DeclareSymbolFontAlphabet{\mathbbl}{bbold}
\newcommand{\Prism}{{\mathlarger{\mathbbl{\Delta}}}} 
\newcommand{\br}{W}
\begin{document}

\title[On the periodic topological cyclic homology]
{On the periodic topological cyclic homology of DG categories in characteristic p }




\author[A.~Petrov]{Alexander Petrov}
\address{Harvard University, USA}
\email{alexander.petrov.57@gmail.com}
\author[ V. ~Vologodsky]{ Vadim Vologodsky}
\address{National Research University ``Higher School of Economics'',  Russia}
\email{vologod@gmail.com}

\begin{abstract} 
We prove that the $p$-adically completed periodic topological cyclic homology of a DG category over a perfect field $k$ of characteristic $p>2$  is isomorphic 
 to the ($p$-adically completed)  periodic  cyclic homology of a lifting of the DG category over the Witt vectors $W(k)$.

 \end{abstract}

\maketitle

\tableofcontents

\section{Introduction}
Let $\cC$ be a DG category over a perfect field $k$ of characteristic $p$, and let $\TP(\cC)$ be the  periodic topological cyclic homology spectrum introduced by L. Hesselholt in (\cite{h}).
Denote by $\hTP (\cC)$ the $p$-adic completion of the spectrum $\TP(\cC)$. A lifting of $\cC$ over the ring $W(k)$ of Witt vectors is a DG category $\tC$  over $W(k)$ whose objects are those of $\cC$ and morphisms are complexes of flat $W(k)$-modules, together with quasi-isomorphisms 
$$  Mor_{\tC} (X,Y)\otimes _{W(k)} k   \iso  \Mor_\cC (X,Y),$$
for every pair of objects $X$ and $Y$, compatible  with the compositions. Denote by $\hHP(\tC/W(k))$ the $p$-adic completion of the periodic cyclic homology complex $\HP(\tC/W(k))$. By definition,
$\hHP(\tC/W(k))$ is an object of the derived category of $W(k)$-modules; abusing notations we shall also denote by $\hHP(\tC/W(k))$  the underlying spectrum.
The main result of this paper is the following.

\begin{Th}\label{mainthint} For any  DG category $\cC$ over a perfect field $k$ of characteristic $p>2$ and a lifting $\tC$ of $\cC$ over $W(k)$, one has a natural isomorphism of spectra
\begin{equation}\label{maintheq}
\hTP (\cC) \iso \hHP(\tC/W(k)).
\end{equation}
\end{Th}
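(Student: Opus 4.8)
The plan is to compare both sides after base change along the cyclotomic/crystalline comparison machinery, reducing the statement to a local computation on Hochschild homology. First I would recall that $\TP$ and $\HP$ are both obtained from the corresponding (topological) Hochschild homology by a Tate construction for the circle action, so it suffices to understand how $\THH(\cC)$, as a cyclotomic spectrum, relates to $\HH(\tC/W(k))$, as an object with $S^1$-action, after $p$-completion. The key input is the identification, due to Bhatt--Morrow--Scholze and Hesselholt, of $\hTP$ of a (lift of a) smooth algebra with its crystalline cohomology made two-periodic; the upgrade we need here is a categorical, noncommutative version of that statement, together with the observation that both functors $\cC \mapsto \hTP(\cC)$ and $\tC \mapsto \hHP(\tC/W(k))$ satisfy enough descent to be controlled by their values on a generating collection (perfect complexes on affine schemes, or matrix algebras over smooth $W(k)$-algebras).

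The key steps, in order, would be: \emph{(i)} Reduce from general DG categories to a class of ``algebraic'' examples by a localization/Morita argument: both sides are localizing invariants (they send exact sequences of DG categories to fiber sequences of spectra) and commute with filtered colimits after $p$-completion, so it is enough to produce the isomorphism, functorially, on the DG category of perfect complexes over a smooth $W(k)$-algebra $\tA$ lifting a smooth $k$-algebra $A$. \emph{(ii)} For such $\tA$, identify $\hHP(\tA/W(k))$ with the two-periodized $p$-completed de Rham (equivalently, crystalline) cohomology $\RGdR(\tA/W(k))$, using the HKR filtration and the fact that $\tA$ is smooth. \emph{(iii)} For $A$ over $k$, identify $\hTP(A)$ with the two-periodized crystalline cohomology $\RGcr(A/W(k))$, via the BMS-type motivic filtration on $\TP$ whose graded pieces are (shifts of) the Nygaard-completed prismatic / crystalline complexes. \emph{(iv)} Invoke the crystalline comparison $\RGcr(A/W(k)) \iso \RGdR(\tA/W(k))$ to match the two sides, and check that the resulting equivalence is compatible with the $S^1$-structure / Tate construction, hence two-periodicity is preserved. \emph{(v)} Promote this to an equivalence of functors on all DG categories by descent, using step (i) and the fact that an equivalence of localizing invariants can be checked on generators; the hypothesis $p>2$ enters to guarantee the degeneration of the relevant spectral sequences and the splitting of the motivic filtration (the Tate twists carry $\tfrac{1}{(p-1)!}$-type denominators that are harmless only away from $p=2$).

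The main obstacle I expect is step \emph{(v)} together with the functoriality in step \emph{(iii)}: one must show that the comparison is not merely an abstract isomorphism for each fixed smooth algebra but is natural enough to glue, i.e.\ that the cyclotomic structure on $\THH$ is recovered, after $p$-completion, purely from the $W(k)$-linear mixed complex $\HH(\tC/W(k))$ equipped with its Sen operator / Frobenius, independently of the chosen lift. Concretely, the hard part is comparing the \emph{canonical} and \emph{Frobenius} maps $\THH(\cC)^{tC_p} \to \THH(\cC)^{hS^1}$ that define $\TP$ with the structure maps of the de Rham--Witt / crystalline complex of the lift, and checking this identification is independent of $\tC$ and compatible with localization sequences. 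Once that naturality is in hand, the descent argument of step (i) upgrades the pointwise equivalence to the asserted natural isomorphism \eqref{maintheq}, and the $p>2$ hypothesis is exactly what makes the obstruction groups (controlled by divided powers of $p$) vanish.
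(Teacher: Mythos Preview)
Your plan has a genuine gap at the reduction step (i)/(v), and it is not a technicality. To ``check an equivalence of localizing invariants on generators'' you must first \emph{have} a natural transformation between the two functors; only then can you test whether it is an equivalence objectwise. But producing that transformation is the entire content of the theorem. You cannot manufacture a map $\hTP(\cC)\to\hHP(\tC/W(k))$ by matching values on smooth commutative $W(k)$-algebras and then ``gluing'', because (a) the right-hand side is a functor of the \emph{lift} $\tC$, not of $\cC$, so there is no common source category on which both are localizing invariants; (b) DG categories over $k$ are not generated, in any sense a localizing invariant would see, by $\Perf$ of smooth affines --- Morita/localization sequences do not let you build an arbitrary (possibly noncommutative, possibly non-smooth, possibly unbounded) DG category from smooth commutative ones; and (c) even granting a pointwise match via the BMS and HKR filtrations, you have no mechanism to make those identifications cohere into a single natural map, as you yourself flag in the last paragraph. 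So steps (ii)--(iv) establish the theorem for smooth commutative algebras (which is essentially in the literature), but step (v) does not exist as stated.

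The paper takes a completely different route that avoids any reduction to the commutative case. It constructs, for \emph{every} DG category $\cC$ over $k$ (no lift required), an intermediate object $\HP^{cris}(\cC,W(k))$ as a bar construction
\[
\HP^{cris}(\cC,W(k)) \;=\; \bigl(\text{``}\,\wHH(\cC/W(k))\otimes_{\wHH(k/W(k))}\wHH(W(k)/W(k))\,\text{''}\bigr)^{tS^1},
\]
where the crucial ingredient is an $E_\infty$-algebra augmentation $\wHH(k/W(k))\to\wHH(W(k)/W(k))$ in the $p$-completed Tate category. That augmentation is built by a Gauss--Manin ``parallel transport'' argument (Theorem~\ref{GMC: Th}): one shows $\wHH(\cA_{x=0}/W(k))\simeq\wHH(\cA_{x=p}/W(k))$ for any $E_\infty$-algebra $\cA$ over $W(k)[x]$, and applies this to $\cA=W(k)[x,\epsilon]$ with $d\epsilon=x$, whose fibers at $0$ and $p$ are $W(k)[\epsilon]$ and $k$. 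With $\HP^{cris}$ in hand, one gets honest natural maps $\hHP(\tC/W(k))\to\HP^{cris}(\cC,W(k))$ and $\hTP(\cC)\to\HP^{cris}(\cC,W(k))$, and both are checked to be isomorphisms by reducing mod $p$ and invoking the single result $\TP(\cC)/p\simeq\HP(\cC/k)$ from \cite{amn}. No filtration, no crystalline comparison, no smoothness, and no descent are used. Finally, your diagnosis of where $p>2$ enters is off: it is needed so that the evaluation-at-$p$ map on the divided power envelope of $(x)\subset W_n(k)[x]$ factors through a finite PD-quotient (one needs $v_p(p^{m+1}/(m+1)!)\to\infty$, which fails for $p=2$); it has nothing to do with degeneration of spectral sequences.
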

 Recall that the  periodic topological cyclic homology spectrum of a DG category over $k$ is a module over the topological cyclic homology ring spectrum $\TC(k)$. Using a computation from
 (\cite{ns}), for every perfect field $k$, the connective cover of  $\TC(k)$ is identified with the Eilenberg-MacLane ring spectrum $HW(k)$. This makes $\hTP (\cC)$ into a $HW(k)$-module. Note that the right-hand side of (\ref{maintheq}) has  a $HW(k)$-module structure by construction. We do not know if isomorphism (\ref{maintheq}) can be promoted to an isomorphism
 of $HW(k)$-modules. However, it does induce an isomorphism of $W(k)$-modules:
 $$\pi_i \hTP (\cC) \iso \hHP _{i}(\tC/W(k)).$$
 
To explain the key idea in the proof of Theorem \ref{mainthint} we need to introduce a bit of notation.
Recall that for a DG category $\cC$ over a commutative ring $\br$ the Hochschild homology $\HH(\cC/\br)$ is naturally an object of the
 symmetric monoidal stable $\infty$-category  $\Mod_{\br[S^1]}$ of $\br$-modules equipped with an action of the circle. The  periodic cyclic homology  $\HP(\cC/W)$ is constructed from
 $\HH(\cC/\br)$  by applying the Tate invariants functor $ \Mod_{\br[S^1]} \to  \Mod_{\br^{tS^1}}$, $M \mapsto M^{tS^1}$,  with respect to the circle action.
 We consider 
  a certain quotient of $\Mod_{\br[S^1]}$ by a monoidal ideal the  Tate invariants functor factors through:
\begin{equation}\label{tatecatint}
 \Mod_{\br[S^1]} \to  \Mod_{\br[S^1]}^t  \to  \Mod_{\br^{tS^1}}.
\end{equation}
The second arrow in (\ref{tatecatint}) is fully faithful when restricted to the subcategory of  $\Mod_{\br[S^1]}^t$ spanned by $W$ (with the trivial circle action).  Moreover, we have that, for every $M\in   \Mod_{\br[S^1]}$
$$ M ^{tS^1} \iso Mor_{  \Mod_{\br[S^1]}^t  }(W, M).$$
The advantage of  $ \Mod_{\br[S^1]}^t$ is that the functor $ \Mod_{\br[S^1]} \to  \Mod_{\br[S^1]}^t$ is symmetric monoidal, whereas    $ \Mod_{\br[S^1]} \to  \Mod_{\br^{tS^1}}$  is not. 

We also consider the $p$-completed version of (\ref{tatecatint}). For a fixed prime number $p$ and  a  stable $\infty$-category $\cM$ we denote by $\widehat{\cM}$ the  stable $\infty$-category obtained from $\cM$ by first taking the $p$-completions of 
the spaces of morphisms in $\cM$ and then applying the stabilization.  We apply this construction  to (\ref{tatecatint}).
\begin{equation}\label{tatecatintp-compl}
 \widehat{\Mod}_{\br[S^1]} \to \widehat{\Mod}^t_{\br[S^1]}   \to \widehat{\Mod}_{\br^{tS^1}}.
\end{equation}
Denote by  $\wHH(\cC/  \br)$ the image of   $\HH(\cC/\br)$ in the category  $  \widehat{\Mod}^t_{\br[S^1]}$.
For any $E_\infty$-algebra $\cA$ over $W$, the object  $\wHH(\cA/\br)  \in  \widehat{\Mod}^t_{\br[S^1]}$ has a natural structure of an  $E_\infty$-algebra over $\wHH(\br/\br)$.

The key step in the proof of  Theorem \ref{mainthint} is a construction
of a homomorphism of $E_\infty$-algebras over $\wHH(W(k)/W(k))$
\begin{equation}\label{augmentation.Intro}
 \wHH(k/W(k)) \rar{} \wHH(W(k)/W(k)),
\end{equation}
that reduces to  $\oHH(k\otimes_ {W(k)} k/k )\to  \oHH(k/k)$ modulo $p$\footnote{Throughout this paper $\HH(\cA/W)$ stands for the ``derived'' version of the  Hochschild homology functor (see, for example, \cite[\S 2.2]{bms}).
In particular,  $\HH(k/W(k))$ is computed via the cyclic bar construction applied to a $W(k)$-flat replacement for $k$. }.
Our construction of  (\ref{augmentation.Intro})  uses the Gauss-Manin connection on the periodic cyclic homology. Namely,  given any $E_{\infty}$-algebra $\cA$ over the polynomial algebra
$W(k)[x]$
   we show the existence of a natural isomorphism of $E_{\infty}$-algebras 
\begin{equation}\label{GMC: intro}
  {\wHH} ( \cA_{x=0} /  W(k)) \iso {\wHH}( \cA_{x=p} /  W(k)).
  \end{equation}
Here $ \cA_{x=t}$ stands for the (derived) fiber $\cA \otimes_{W(k)[x]} W(k)[x]/(x-t)$ of $\cA$ over the point $x=t$. We then apply (\ref{GMC: intro}) to the DG algebra 
$\cA= W(k)[x,\epsilon]$ generated over $W(k)[x]$ by $\epsilon$ with $\deg \epsilon = -1$, $\epsilon^2 =0$, and $d \epsilon =x$. Observing that  $\cA_{x=p} \iso k$ and that
 the fiber $\cA_{x=0}\iso W(k)[\epsilon]$ admits an algebra map to $W(k)$ we get the augmentation  (\ref{augmentation.Intro}). Note that the classical construction of the
  Gauss-Manin connection on the periodic cyclic homology due to Daletsky, I. Gelfand, Tsygan, and Getzler (see \cite{dgt}, \cite{g}) is not quite sufficient for our purposes
  as it does not, at least  on the nose,  give an isomorphism of $E_{\infty}$-algebras (\ref{GMC: intro}); in \S \ref{gmcr} we propose a different construction of the connection where its multiplicative property is obvious. This is the technical heart of the paper. 
 
 Next, using the homomorphism of  $E_\infty$-algebras (\ref{augmentation.Intro}) we  define a lax symmetric monoidal functor
\begin{equation}
\HP ^{cris}(-  ,W(k)): \text{DG categories over $k$} \rar{}  \Mod_{W(k)^{tS^1}}
 \end{equation}
 as follows. Given a  DG category $\cC$ over $k$ we consider the  $\wHH(k/W(k))$-module  $\wHH(\cC/W(k))$. Using 
  (\ref{augmentation.Intro})  we form the ``tensor product''
  \begin{equation}
 {\wHH}(\cC/W(k)) \otimes _ {   {\wHH}(k/W(k))  } {\wHH}(W(k)/W(k)) \in \Fun(\Delta^{op},  \widehat{\Mod}_{W(k)[S^1]}^t)
   \end{equation}
  considered as a simplicial object in  $\widehat{\Mod}_{W(k)[S^1]}^t$  given by  the two-sided bar construction. Projecting this object to $ \Fun(\Delta^{op},   \widehat{\Mod}_{W(k)^{tS^1}})$ and taking the colimit over $\Delta^{op}$ we get
$  \HP ^{cris}(\cC ,W(k))$.

In Lemma \ref{finhh} we show that under some finiteness condition the construction of $\HP^{cris}$ can be performed directly in the category $\Mod_{W(k)^{tS^1}}$. Namely, assume that 
for a DG category $\cC$ there exists an integer $N$ such the Hochschild homology groups $H^i(\HH(\cC/k))$ vanish for $i<N$ . Then we have a quasi-isomorphism 
$$\HP^{cris}(\cC, W(k))\simeq \HP(\cC/ W(k))\widehat{\otimes}_{\HP(k/ W(k))}\HP(W(k)/W(k)).$$

We remark that, by construction, we have a canonical map of  $\HP(W(k)/W(k))$-modules  $ \widehat{\HP} (\cC/W(k)) \to   \HP ^{cris}(\cC,W(k))$. 
We show that, for any lifting $\tC$ of $\cC$, the compositions
  \begin{equation}\label{liftsplitting: intro}\HP (\tC /W(k)) \to   \HP (\cC  /W(k))\to  \HP ^{cris}(\cC ,W(k))\end{equation}
   \begin{equation}\label{TPsplitting: intro} \TP (\cC)  \to   \HP (\cC  /W(k))\to  \HP ^{cris}(\cC ,W(k))\end{equation}
  are  isomorphisms after the $p$-completion.

 Our definition  of the functor $  \HP ^{cris}(- ,W(k))$ is inspired by Bhatt's construction of crystalline cohomology from the derived de Rham cohomology (\cite{b}, Corollary 8.6).
 
\smallskip
We were informed that Peter Scholze has also obtained a proof  of  Theorem \ref{mainthint}  in case when $\cC$ is a smooth and proper DG category over $k$.
We are grateful to Alexander Beilinson, Bhargav Bhatt,  Alexander Efimov, Jacob Lurie, and Nick Rozenblyum  for useful discussions. Special thanks go to
Dmitry Kaledin for his constant attention to this work and many useful suggestions and to  Akhil Mathew for drawing our attention to Bhatt's paper \cite{b}. This work was completed while the first-named author was supported by a Clay Research Fellowship. The work of the second-named author was supported in part by RNF grant  N\textsuperscript{\underline{o}}~$18-11-00141$.


\section{The Gauss-Manin connection revisited}\label{gmcr}
We start by introducing a bit of notation. For a commutative ring $\br$   we denote by $\Mod_{\br[S^1]}$ 
the symmetric monoidal stable $\infty$-category of $\br$-modules equipped with an action of the circle, that is the category of functors from $BS^1$ to $\Mod_{\br}$. Let $s:*\to BS^1$ be the inclusion of the unique $0$-cell and consider the induction functor $s_{!}:\Mod_{\br}\to \Mod_{\br[S^1]}$.  Let $T \subset \Mod_{\br[S^1]}$ 
be the smallest stable  subcategory $T \subset \Mod_{\br[S^1]}$ that contains all objects of the form  $s_{!}N$, where 
$N$ is a $\br$-module. Note that, for any $M\in \Mod_{\br[S^1]}$, we have that
$$s_{!}(N\otimes_{\br} s^*M)=s_{!}N\otimes_{\br} M$$ In particular, we have  $s_!N \otimes_{\br}  M\in T$. It follows that $T$ is a tensor ideal in  $ \Mod_{\br[S^1]}$, that is, for every $M \in   \Mod_{\br[S^1]}$ and $M'\in T$,  the tensor product $M\otimes M'$ is in $T$.
 We can thus form the symmetric monoidal quotient category  $\Mod_{\br[S^1]}/T$, that we denote by $\Mod_{\br[S^1]}^t$. It is equipped with a symmetric monoidal projection functor
$$ \Mod_{\br[S^1]} \to \Mod_{\br[S^1]}^t, \quad M\mapsto \overline M $$

We observe that the Tate invariants functor 
$ \Mod_{\br[S^1]} \to  \Mod_{\br^{tS^1}}$ factors uniquely through 
\begin{equation}\label{GMC: TateInv}
\Mod_{\br[S^1]}^t \to  \Mod_{\br^{tS^1}}. 
\end{equation}
Abusing notation, we shall write $ \overline M ^{tS^1}$ for the latter applied to  $ \overline M  \in  \Mod_{\br[S^1]}^t$.
By functoriality, for every $ \overline M  \in  \Mod_{\br[S^1]}^t$, we have a morphism
$$ \Hom_{  \Mod^t_{\br[S^1]}}(\overline{\br}, \overline M) \to \Hom_{\Mod_{\br^{tS^1}}}(W^{tS^1},\overline{M}^{tS^1})= \overline M ^{tS^1}.$$
\begin{lm}
 The above morphism is quasi-isomorphism. 
\end{lm}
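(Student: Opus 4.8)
The plan is to unwind the definition of the quotient functor $\Mod_{\br[S^1]} \to \Mod_{\br[S^1]}^t$ and compare the two mapping spectra directly. Recall that in a Verdier-type (or, more precisely, $\infty$-categorical Dwyer--Kan) localization $\Mod_{\br[S^1]} \to \Mod_{\br[S^1]}/T = \Mod_{\br[S^1]}^t$, the mapping object $\Hom_{\Mod^t_{\br[S^1]}}(\overline{\br}, \overline M)$ is computed as a filtered colimit of $\Hom_{\Mod_{\br[S^1]}}(\br', M')$ over a cofiltered system of maps $\br' \to \br$ with cofiber in $T$ and $M \to M'$ with cofiber in $T$. Since the source is fixed at $\overline{\br}$ and $\Mod_{\br[S^1]}^t$ is still compactly generated so that $\overline\br$ has an explicit description, the first step is to identify a convenient cofinal subsystem. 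The natural candidate is the tower built from the standard cellular filtration of $\br$ as an $\br[S^1] = C_*(S^1;\br)$-module: the cofiber sequence $s_!\br[1] \to s_!\br \to \br$ (the Gysin/augmentation sequence, coming from the cell structure $S^1 = e^0 \cup e^1$) exhibits $\br$ as an iterated extension where the ``error terms'' are exactly the induced objects $s_!N$ generating $T$.

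Concretely, I would proceed as follows. First, recall the standard identification $\Mod_{\br^{tS^1}} = \Mod_{\br[S^1]}/\langle s_!\br\rangle^{\otimes}$ localized at the Tate construction: the Tate invariants functor $M \mapsto M^{tS^1} = \mathrm{cofib}(M_{hS^1} \to M^{hS^1})$ kills every induced module $s_!N$, since $(s_!N)^{tS^1} = 0$ (induced modules are both inflated and coinflated, so norm is an equivalence). This gives the factorization (\ref{GMC: TateInv}) and, more to the point, shows that on the level of the quotient, $\overline M^{tS^1}$ is computed by the same formula. Second, I would observe that $\Hom_{\Mod_{\br^{tS^1}}}(\br^{tS^1}, \overline M^{tS^1}) = \overline M^{tS^1}$ tautologically, since $\br^{tS^1}$ is the unit of $\Mod_{\br^{tS^1}}$. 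So the content is the claim that $\Hom_{\Mod^t_{\br[S^1]}}(\overline\br, \overline M) \to \overline M^{tS^1}$ is an equivalence. Third, using that $\Hom_{\Mod^t_{\br[S^1]}}(\overline\br, \overline M)$ is the localization-mapping-object, I would compute it as
\[
\Hom_{\Mod^t_{\br[S^1]}}(\overline\br, \overline M) \;\simeq\; \colim_{\br' \to \br,\ \mathrm{cofib}\in T}\ \Hom_{\Mod_{\br[S^1]}}(\br', L_T M),
\]
where $L_T M$ is the $T$-local replacement; but in fact $L_T M = M^{tS^1}$ viewed back in $\Mod_{\br[S^1]}$ with trivial action is not right --- rather one should use that the right adjoint to the projection takes $\overline M$ to the ``$T$-complete'' or ``$T$-cocomplete'' object, and this is precisely the object whose underlying spectrum with $S^1$-action recovers $M^{tS^1}$. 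The cleanest route: show the composite $\Mod_{\br[S^1]} \to \Mod_{\br[S^1]}^t \to \Mod_{\br^{tS^1}}$ realizes $\Mod_{\br^{tS^1}}$ as a further Bousfield localization of $\Mod_{\br[S^1]}^t$ at the single map $\overline{s_!\br} \to 0$, i.e.\ $\Mod_{\br^{tS^1}}$ is the full subcategory of $\br^{tS^1}$-local objects, and that $\overline\br$ is already local (its mapping spectra out of it into $\overline M$ don't see any further $T$-torsion because $T$ has already been quotiented). Then the unit map into the localization is an equivalence on local objects, and $\overline M$ maps to $\overline M$; one checks $\overline\br$ is local by the cofiber sequence above, since $\overline{s_!\br} = 0$ in $\Mod^t_{\br[S^1]}$.

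The main obstacle I anticipate is the bookkeeping around \emph{which} localization is at play and in which direction the completion/cocompletion goes: the Tate construction is simultaneously a quotient (killing inductions, i.e.\ a Verdier quotient on the homotopy-orbit side) and a completion on the homotopy-fixed-point side, and one must be careful that $\overline\br \in \Mod^t_{\br[S^1]}$ is in fact the unit and that $\Mod_{\br^{tS^1}} \hookrightarrow \Mod^t_{\br[S^1]}$ is the inclusion of the $T'$-local objects for the \emph{residual} ideal $T' = \langle\overline{s_!\br}\rangle$, which however is $\mathbf{0}$ already --- so actually the second functor in (\ref{tatecatint}) is itself fully faithful on a suitable subcategory, as asserted in the introduction. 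The honest content then reduces to: $\Hom_{\Mod^t_{\br[S^1]}}(\overline\br, \overline M)$ and $\overline M^{tS^1}$ agree because both are computed by mapping out of the unit, and the functor to $\Mod_{\br^{tS^1}}$ preserves this since it is right adjoint to a fully faithful embedding (or: preserves the relevant (co)limits and the unit). I would make this precise by exhibiting $\Mod_{\br^{tS^1}}$ as the Karoubi-completion / idempotent-completion of the full subcategory of $\Mod^t_{\br[S^1]}$ on $\overline\br$ and its shifts, invoking the introduction's assertion that the second arrow is fully faithful there, and then the lemma is immediate. The one genuinely technical point to verify carefully is that $(s_!\br)^{tS^1} \simeq 0$, equivalently that the norm map $(s_!\br)_{hS^1} \to (s_!\br)^{hS^1}$ is an equivalence; this is the Adams isomorphism / the statement that induced $S^1$-modules are ``Tate-acyclic'', and it is what makes the ideal $T$ land in the kernel of Tate in the first place.
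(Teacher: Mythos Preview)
First, note that the paper states this lemma without proof --- there is no \texttt{proof} environment following the statement, and the text moves directly to the consequence about full faithfulness. So there is no ``paper's proof'' to compare against; what follows is an evaluation of your attempt on its own.

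Your initial idea is the right one: compute $\Hom_{\Mod^t_{\br[S^1]}}(\overline\br,\overline M)$ via the calculus-of-fractions formula for a Verdier quotient, using the explicit tower
\[
\cdots \xrightarrow{\,u\,} \br[-4] \xrightarrow{\,u\,} \br[-2] \xrightarrow{\,u\,} \br
\]
coming from the cofiber sequence $\br[-2]\xrightarrow{u}\br\to s_*\br\simeq s_!\br[-1]$. One then checks that this tower is cofinal among maps $\br'\to\br$ with cofiber in $T$ (because any $C\in T$ is a finite extension of objects $s_!N$, and $u$ acts by zero on each $(s_!N)^{hS^1}$ since $s^*u=0$, so $u^n$ annihilates the class of $\br\to C$ for $n$ large enough), and computes
\[
\Hom_{\Mod^t_{\br[S^1]}}(\overline\br,\overline M)\;\simeq\;\colim_n \Hom_{\Mod_{\br[S^1]}}(\br[-2n],M) \;=\; \colim_n M^{hS^1}[2n],
\]
the colimit taken along multiplication by $u$. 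Finally one identifies this colimit with $M^{tS^1}$ using the norm cofiber sequence $M_{hS^1}\to M^{hS^1}\to M^{tS^1}$ together with the fact that $u$ acts locally nilpotently on $M_{hS^1}$. You gesture at all of this but do not actually carry it out; in particular you never verify cofinality or the last identification.

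The real problem is your ``cleanest route''. You propose to realize $\Mod_{\br^{tS^1}}$ as a further Bousfield localization of $\Mod^t_{\br[S^1]}$ at $\overline{s_!\br}\to 0$, then observe that $\overline{s_!\br}=0$ already, and conclude by ``invoking the introduction's assertion that the second arrow is fully faithful there''. This is circular: that assertion in the introduction (and the displayed isomorphism $M^{tS^1}\simeq\Mor_{\Mod^t_{\br[S^1]}}(W,M)$ immediately after it) \emph{is} the lemma --- it is stated there as a summary of what is to be proved, not as an independent input you may cite. Worse, the observation that the residual ideal vanishes would, if it implied anything, imply that the second arrow in (\ref{tatecatint}) is an equivalence; but it is not (for instance $(-)^{tS^1}$ fails to preserve infinite coproducts, and $T$ is merely a stable subcategory, not a localizing one --- indeed the localizing subcategory generated by $s_!\br$ is all of $\Mod_{\br[S^1]}$, since $s^*$ is conservative). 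Drop this route entirely and complete the tower computation you began with.
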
 

\begin{proof} The proof is analogous to that of Lemma I.3.8(iii) of \cite{ns}. We have $\Hom_{\Mod^t_{\br[S^1]}}(\overline{\br},\overline{M})=\hocolim_{N\in T/M}\Hom_{\br[S^1]}(\br, \cofib(N\to M))=\hocolim_{N\in T/M}\cofib(N^{hS^1}\to M^{hS^1})=\hocolim_{N\in T/M}\fib(M^{tS^1}\to\cofib(N_{hS^1}[2]\to M_{hS^1}[2]))$. In the last equality we used that $N^{tS^1}$ vanishes and that $M^{tS^1}$ is the cofiber of the norm map $M_{hS^1}[1]\to M^{hS^1}$. Since $(-)_{hS^1}$ commutes with colimits this is equivalent to $\fib(M^{tS^1}\to (\hocolim_{N\in T/M}(N\to M))_{hS^1})$ and this fiber is just $M^{tS^1}$ because any $M$ is equivalent to $\colim_{N\in T/M}N$. 
\end{proof}

It follows from the Lemma that  functor (\ref{GMC: TateInv}) is fully faithful when restricted to the subcategory of  $\Mod_{\br[S^1]}^t$, whose objects are perfect as $\br$-modules.

Let $R$ be the divided power envelope of the ideal $(x)$ in the polynomial algebra $\br[x]$, $I\subset R$ the  divided power ideal generated by $x$.
For a DG algebra  $\cA \in \DGA_{R}$ and an integer $n\geq 0$, we set 
$$\cA_n:= \cA  \otimes_{R}  R/I^{[n+1]}  \in \DGA_{ R/I^{[n]}}.$$

The main result of this section is the following.  
\begin{Th}\label{GMC: Th}  For any $\cA \in   \DGA_{R}$,
the projection 
\begin{equation}\label{GMC: proj}
 \oHH ( \cA_n /  R/I^{[n+1]}) \to \oHH( \cA_0 /  \br)
 \end{equation}
admits a right inverse:
$$\alpha_n: \oHH( \cA_0 /  \br) \to  \oHH ( \cA_n /  R/I^{[n+1]}), $$
that induces a quasi-isomorphism 
$$ \oHH( \cA_0 /  \br) \otimes _{\br}  R/I^{[n+1]} \iso \oHH ( \cA_n /  R/I^{[n+1]}).$$
Moreover, the maps $\alpha_n$, $n=0, 1, \cdots$,    can be lifted to a morphism of symmetric monoidal functors from the category   $\DGA_{R}$ to 
the category of pro-objects in  $ \Mod_{\br[S^1]}^t$:
  $$\alpha:  \oHH( \cA_0 /  \br) \to  \lim\limits_{n}\oHH ( \cA_n /  R/I^{[n+1]}), $$
\end{Th}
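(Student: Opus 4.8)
The plan is to reduce the whole statement to one ``divided-power Poincaré lemma'' for $\oHH$, and then let symmetric monoidality do the bookkeeping: the advantage of working inside $\Mod_{\br[S^1]}^t$ is precisely that the quotient functor $\Mod_{\br[S^1]}\to\Mod_{\br[S^1]}^t$ is symmetric monoidal and colimit-preserving, so that every construction below is automatically compatible with tensor products and with bar constructions, whence the multiplicative statement comes for free.

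\emph{Step 1 (base change).} Write $R_n:=R/I^{[n+1]}$, so that $\cA_n=\cA\otimes_R R_n$ and $\cA_0=\cA\otimes_R\br$. Base change for Hochschild homology --- a formal consequence of the symmetric monoidality of $\HH(-/-)$ --- together with colimit-preservation of the quotient functor yields natural equivalences in $\Mod_{\br[S^1]}^t$
$$\oHH(\cA/R)\simeq\oHH(\cA/\br)\otimes_{\oHH(R/\br)}\overline R,\quad \oHH(\cA_n/R_n)\simeq\oHH(\cA/R)\otimes_{\overline R}\overline{R_n},$$
and likewise $\oHH(\cA_0/\br)\simeq\oHH(\cA/\br)\otimes_{\oHH(R/\br)}\overline\br$; here $\overline R=\oHH(R/R)$ denotes $R$ equipped with the trivial circle action, and the maps $\oHH(R/\br)\to\overline R$, $\oHH(R/\br)\to\overline\br$ are the canonical ones induced by $\id_R$ and by $R\twoheadrightarrow R/I=\br$. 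Combining the first two, $\oHH(\cA_n/R_n)\simeq\oHH(\cA/\br)\otimes_{\oHH(R/\br)}\overline{R_n}$ along $\oHH(R/\br)\to\overline R\to\overline{R_n}$. Thus all the objects in sight are governed by the single $E_\infty$-algebra $\oHH(R/\br)$ and the two maps out of it.

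\emph{Step 2 (the key lemma --- technical heart).} I claim that the unit $\overline\br=\oHH(\br/\br)\to\oHH(R/\br)$ is an equivalence in $\Mod_{\br[S^1]}^t$, i.e.\ that the reduced Hochschild complex $\widetilde{\HH}(R/\br)$ lies in the tensor ideal $T$. Being an $E_\infty$-map which is an equivalence of underlying objects, it is then an equivalence of $E_\infty$-algebras; moreover the composite $\overline\br\xrightarrow{\ \sim\ }\oHH(R/\br)\to\overline\br$ (second arrow from $R\to\br$) is the identity and $\overline\br\xrightarrow{\ \sim\ }\oHH(R/\br)\to\overline R$ is the unit $\br\hookrightarrow R$ (with trivial action). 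To prove the claim I would filter $\HH(R/\br)$ by its Hodge/HKR filtration and show that, after applying the quotient functor, the associated graded collapses onto its weight-zero piece $\overline\br$: the circle action on the graded pieces is the de Rham differential on $\textstyle\bigwedge^{\bullet}\mathbb{L}_{R/\br}$, and for $R=\br\langle x\rangle$ this is exactly a divided-power Poincaré lemma --- the truncated complex $[\,R\xrightarrow{\ d\ } R\,dx\,]$ is acyclic in positive degrees because $d(x^{[m+1]})=x^{[m]}\,dx$ \emph{with no denominators}. This is precisely where the divided powers are indispensable: for $\br[x]$ in place of $R$ the analogue fails over a base of residue characteristic $p$, the obstruction being the $p$-torsion in $H^1$ of the ordinary de Rham complex of the affine line. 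The main obstacle is to handle the non-smoothness of $R$ over $\br$ --- so that $\mathbb{L}_{R/\br}$, and with it $\HH(R/\br)$, carries higher homotopy --- and to verify that these higher contributions also lie in $T$; this is the incarnation, in the present setting, of Bhatt's comparison between derived de Rham and crystalline cohomology.

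\emph{Step 3 (assembling $\alpha$).} Granting Step 2 and plugging it into Step 1: first, $\oHH(\cA/\br)\otimes_{\oHH(R/\br)}\overline\br\simeq\oHH(\cA/\br)$, so the reduction map $\oHH(\cA/\br)\to\oHH(\cA_0/\br)$ is an equivalence, natural in $\cA\in\DGA_R$; second, $\oHH(R/\br)\to\overline R\to\overline{R_n}$ is carried to the unit $\overline\br\to\overline{R_n}$, whence
$$\oHH(\cA_n/R_n)\ \simeq\ \oHH(\cA/\br)\otimes_{\overline\br}\overline{R_n}\ \simeq\ \oHH(\cA_0/\br)\otimes_\br R_n,$$
natural in $\cA$, compatible with the transition maps in $n$, and multiplicative (each step is a base change or the $E_\infty$-equivalence of Step 2). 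Now let $\alpha_n$ be the composite of the unit $\oHH(\cA_0/\br)=\oHH(\cA_0/\br)\otimes_\br\br\to\oHH(\cA_0/\br)\otimes_\br R_n$ with this equivalence. Composing $\alpha_n$ with the projection $(\ref{GMC: proj})$ corresponds to $-\otimes_\br(\br\to R_n\to\br)=\id$, so $\alpha_n$ is a right inverse; its $R_n$-linear extension is, under the equivalence, the identity of $\oHH(\cA_0/\br)\otimes_\br R_n$, which is the asserted quasi-isomorphism; and since the $R_n$ form a tower with inverse limit $R$, the $\alpha_n$ assemble to $\alpha\colon\oHH(\cA_0/\br)\to\varprojlim_n\oHH(\cA_n/R_n)$ in $\mathrm{Pro}(\Mod_{\br[S^1]}^t)$. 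Finally, naturality in $\cA$ and the compatibility of $\alpha$ with the symmetric monoidal structures are inherited verbatim from those of $\HH(-/-)$, of the quotient functor, and of the unit maps; hence $\alpha$ is a morphism of symmetric monoidal functors $\DGA_R\to\mathrm{Pro}(\Mod_{\br[S^1]}^t)$, as claimed.
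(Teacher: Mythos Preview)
Your overall architecture—reduce everything to one statement about $\oHH(R/\br)$ via base change for $\HH$, then let symmetric monoidality do the rest—is clean, and Steps~1 and~3 are fine. But Step~2 is a genuine gap, not a sketch awaiting routine details. The claim is that the cofiber of $\br\to\HH(R/\br)$ lies in $T$. Your proposed argument via the HKR/Hodge filtration runs into exactly the obstacle you name and does not resolve it: $R=\br\langle x\rangle$ is not smooth over $\br$ (over $\bF_p$ it is the infinite tensor product $\bigotimes_{i\ge 0}\bF_p[y_i]/(y_i^p)$), so $\bL_{R/\br}$ has unbounded homotopy and the HKR filtration on $\HH(R/\br)$ has infinitely many nonzero graded pieces. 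Even granting that each piece lands in $T$, this does not place the total object in $T$: by definition $T$ is the smallest \emph{stable} subcategory containing the $s_!N$, hence closed only under finite colimits, and an infinite filtration does not suffice without a separate argument that the whole cofiber is a single $s_!$ of something. Your appeal to Bhatt's derived de Rham/crystalline comparison is not directly usable either, since that result lives in the Hodge-\emph{completed} world and completion has no evident meaning inside $\Mod^t_{\br[S^1]}$. In short, it is not clear that Step~2 is even true as stated, and the outlined method does not establish it.

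The paper never attempts anything like your Step~2. Instead it works at the level of cyclic $\br$-complexes and introduces an explicit quotient $F=R^{\#}/J^{[2]}$ of the cyclic bar construction of $R$, where $J=\ker(R^{\#}\to\underline R)$ carries a natural PD structure. A hands-on basis computation (Lemma~\ref{freefil}) shows that each truncation $F_n$ admits a \emph{finite} filtration whose subquotients are all copies of the representable cyclic module $Q=\br\cdot\Hom_\Lambda([1],-)$, whose realization is induced. One then forms the roof
\[
\cA_n^{\#}\otimes_{R_n^{\#}}\underline{R_n}\ \xleftarrow{\ \gamma_n\ }\ \cA_n^{\#}\otimes_{R_n^{\#}}F_n\ \xrightarrow{\ \beta_n\ }\ \cA_0^{\#};
\]
the kernel of $\beta_n$ inherits a finite filtration with induced subquotients, so $\beta_n$ becomes invertible in $\Mod^t_{\br[S^1]}$, and one sets $\alpha_n:=\gamma_n\circ\beta_n^{-1}$. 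The requisite finiteness is built in by hand, level by level, and no global statement about $\oHH(R/\br)$ is ever needed; multiplicativity follows because $\cA\mapsto\cA^{\#}\otimes_{R^{\#}}F$ is visibly symmetric monoidal already at the level of cyclic objects.
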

\begin{rem}
Merely the existence of a right inverse to  (\ref{GMC: proj})
 can be easily derived from the existence of the Gauss-Manin connection on the periodic cyclic homology. Our main observation in Theorem \ref{GMC: Th} is that morphisms $\alpha_n$ behave well with  respect to the tensor product of algebras. In particular, if $\cA$ is an $E_m$ algebra over  $R$ then  (\ref{GMC: proj}) admits a right inverse as a morphism of 
 $E_{m-1}$ algebras.
\end{rem}

\begin{proof}

For the construction of the inverse we will work in the additive symmetric monoidal category of cyclic $\br$-complexes. That is, the category of functors from the Connes' cyclic category $\Lambda$ to the category of chain complexes of $\br$-modules. The objects of $\Lambda$ are indexed by positive integers and are denoted by $[1],[2],\dots$. See Appendix B in \cite{ns} or Chapter 6 of \cite{l} for a detailed discussion of the cyclic category.

Recall the construction of a particular cyclic $\br$-module whose corresponding $S^1$-module will be free. Let $Q$ be the cyclic $\br$-module spanned by the cyclic set represented by $[1]$: $$Q([k])=\br\cdot \Hom_{\Lambda}([1],[k])$$

Given a DG algebra $\cB$ over a commutative ring $S$ we will denote by $\cB^{\#/S}$ the cyclic bar construction given by $[k]\mapsto \cB^{\otimes_S k}$(with the face maps defined using multiplication on $\cB$). If the base ring $S$ is $\br$ we will denote this cyclic object just by $\cB^{\#}$. The constant cyclic algebra $S^{\#/S}$ will be denoted by $\underline{S}$. Given two cyclic modules $X_1,X_2$ we define their tensor product object-wise: $(X_1\otimes_{\br}X_2)([k])=X_1([k])\otimes_{\br}X_2([k])$.

There is a functor of geometric realization from the category of cyclic modules to the category of modules with $S^1$-action:

\begin{pr}\label{cycreal}
There is a symmetric monoidal functor $$|-|:Func(\Lambda, \Mod_{\br})\to \Mod_{\br[S^1]}$$ such that

{\normalfont (i)} For any DG algebra $\cB$ over a commutative $\br$-algebra $S$ that is $h$-flat as an $S$-module there is a canonical quasi-isomorphism $|\cB^{\#/S}|\simeq \HH(\cB/S)$.

{\normalfont (ii)} The object $|Q|$ is quasi-isomorphic to $s_!M$ for a certain $\br$-module $M$.
\end{pr}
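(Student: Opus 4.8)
The plan is to take $|-|$ to be the (derived) geometric realization of cyclic objects, in the form recalled in \cite[Appendix~B]{ns} and \cite[Ch.~6]{l}. Concretely, given a cyclic $\br$-complex $X\colon\Lambda\to\Mod_{\br}$ one restricts along the canonical embedding $j\colon\Delta\hookrightarrow\Lambda$ to obtain a simplicial object $j^{*}X$ (in the paper's indexing the simplicial degree $n$ piece is $X([n+1])$), and one sets $|X|:=\colim_{\Delta^{op}}j^{*}X\in\Mod_{\br}$. Connes' observation, functorial in $X$, is that this colimit carries a natural $S^{1}$-action, so that $|-|$ refines to a functor to $\Mod_{\br[S^{1}]}$; equivalently one routes $X$ through the associated mixed complex, i.e.\ a $C_{*}(S^{1};\br)$-module. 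To see that $|-|$ is strong symmetric monoidal for the objectwise tensor product $(X_{1}\otimes X_{2})([k])=X_{1}([k])\otimes_{\br}X_{2}([k])$ I would combine three observations: $j^{*}$ is strong symmetric monoidal; $\colim_{\Delta^{op}}\colon\Fun(\Delta^{op},\Mod_{\br})\to\Mod_{\br}$ is strong symmetric monoidal because $\Delta^{op}$ is sifted; and the $S^{1}$-equivariant refinement is compatible with tensor products because the circle acts through the cyclic structure. Concretely, one may present $|X|$ as a coend $X\otimes_{\Lambda}\br[E\Lambda_{\bullet}]$ against a cocyclic $S^{1}$-space $E\Lambda_{\bullet}$ with each $E\Lambda_{n}$ contractible and $j^{*}E\Lambda_{\bullet}$ the cosimplicial topological simplex; the diagonals $E\Lambda_{n}\to E\Lambda_{n}\times E\Lambda_{n}$ furnish the comparison maps $|X_{1}\otimes X_{2}|\to|X_{1}|\otimes|X_{2}|$, and the Eilenberg--Zilber theorem shows they are quasi-isomorphisms, compatibly with the $S^{1}$-actions and with the unit.

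For part (i) I would argue as follows. For a DG algebra $\cB$ over a commutative $\br$-algebra $S$, the simplicial object $j^{*}(\cB^{\#/S})$ is the standard cyclic bar complex $[n]\mapsto\cB^{\otimes_{S}(n+1)}$ computing Hochschild homology, and $h$-flatness of $\cB$ over $S$ guarantees that the iterated tensor power $\cB^{\otimes_{S}m}$ already represents the derived one; hence $|\cB^{\#/S}|\simeq\colim_{\Delta^{op}}\cB^{\otimes_{S}(\bullet+1)}\simeq\HH(\cB/S)$ in $\Mod_{\br}$. The circle action produced by the cyclic structure is, by construction, the canonical $S^{1}$-action on $\HH(\cB/S)$ (Connes' operator $B$), so this comparison upgrades to an equivalence in $\Mod_{\br[S^{1}]}$, natural in $\cB$.

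For part (ii) the computation is explicit. By Connes' self-duality $\Lambda\simeq\Lambda^{op}$ the cyclic set $[k]\mapsto\Hom_{\Lambda}([1],[k])$ is identified with the representable cyclic set $\Hom_{\Lambda}(-,[1])$. Its underlying simplicial set has $\Hom_{\Lambda}([n+1],[1])$ — a set with $n+1$ elements — in degree $n$, and one checks (this is classical, cf.\ \cite[Ch.~7]{l}) that it is the minimal simplicial model of the circle $S^{1}$, whose geometric realization is $S^{1}$ equipped with the rotation action, a free $S^{1}$-space. Therefore $|Q|$, the realization of the free $\br$-module on this cyclic set, is $C_{*}(S^{1};\br)$ with the translation $S^{1}$-action, which is exactly $s_{!}\br$; so $|Q|\simeq s_{!}M$ with $M\simeq\br$.

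The step demanding the most care is the first one: producing the realization as a genuinely \emph{strong} symmetric monoidal functor that is simultaneously natural in the cyclic object and $S^{1}$-equivariant, rather than merely a lax functor, or a functor to $\Mod_{\br}$ whose target happens to admit a circle action. Once this packaging is in place, parts (i) and (ii) reduce to identifying $|\cB^{\#/S}|$ with Hochschild homology and $|Q|$ with $C_{*}(S^{1};\br)$, whose $S^{1}$-module structures are standard.
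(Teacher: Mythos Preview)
Your construction of $|-|$ and your argument for (i) agree with the paper's: both simply invoke \cite[Proposition~B.5]{ns} for the symmetric monoidal realization functor and take (i) to be the definition of Hochschild homology.

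For (ii) you take a genuinely different route. The paper argues via the paracyclic category: writing $j\colon\Lambda_{\infty}\to\Lambda$ for the covering projection and $\tQ$ for the $\br$-linearization of the representable $\Hom_{\Lambda_{\infty}}([1]_{\Lambda_{\infty}},-)$, one has $j_{!}\tQ=Q$, and since in the Nikolaus--Scholze formalism the $S^{1}$-equivariant realization is computed as the colimit over $\Lambda_{\infty}^{op}$ of the paracyclic pullback, one gets $|Q|=\colim_{\Lambda_{\infty}^{op}}j^{*}j_{!}\tQ=s_{!}(\colim_{\Lambda_{\infty}^{op}}\tQ)$. Your argument instead uses Connes' self-duality to identify the cyclic set underlying $Q$ with the representable $\Hom_{\Lambda}(-,[1])$, recognizes its underlying simplicial set as the minimal model of $S^{1}$, and concludes that its realization is $S^{1}$ with the free rotation action, hence $|Q|\simeq s_{!}\br$. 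Both approaches are correct. The paper's stays entirely within the paracyclic framework already set up in \cite{ns} and avoids any explicit combinatorics; yours is more classical and hands-on, and has the bonus of pinning down $M\simeq\br$ rather than leaving it unspecified.
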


\begin{proof}

The functor is defined in Proposition B.5 of \cite{ns} and (i) is the definition of Hochschild homology. Denote by $\tQ$ the functor from the paracyclic category $\Lambda_{\infty}$ to the category of $\br$-modules spanned by the representable functor $\Hom_{\Lambda_{\infty}}([1]_{\Lambda_{\infty}},-)$. The projection $j:\Lambda_{\infty}\to \Lambda$ provides a functor $j_{!}:Func(\Lambda_{\infty},\Mod_{\br})\to Func(\Lambda,\Mod_{\br})$ that preserves colimits and takes $\tQ$ to $Q$. Hence, $|Q|=\colim_{\Lambda_{\infty}^{op}}{j^*j_!\tQ}=s_!(\colim_{\Lambda_{\infty}^{op}}\tQ)$ so the assertion (ii) is proven.
\end{proof}

The cyclic commutative algebra $R^{\#}$ contains the ideal $J:=\ker(R^{\#/\br}\to R^{\#/R}=\underline{R})$. It is equipped with the divided power structure coming from the inclusions $J([k])\subset\displaystyle\sum\limits_{i=1}^{k}  R^{\otimes (k-i)}\otimes I\otimes R^{\otimes (i-1)}$. Explicitly, $R^{\#}([k])$ is isomorphic to $\br[x_0^{[\cdot]},\dots x_{k-1}^{[\cdot]}]$ with $J([k])$ generated by elements of the form $x_i^{[l]}-x_j^{[l]}$ and the divided powers $(x_i^{[l]}-x_j^{[l]})^{[m]}$ are defined according to the usual binomial formula. 

Taking the quotient by the divided power square of the ideal $J$ we obtain the cyclic algebra $F$ given by $$F=R^{\#}/J^{[2]}$$ Note that the map $R^{\#}\to \underline{\br}$ induced by the quotient $R\to R/I= \br$ factors through a surjection $F\to \underline{\br}$. For brevity, denote the quotient $R/I^{[n+1]}$ by $R_n$.

\begin{lm}\label{freefil}{\normalfont (i)} For every $k$ the elements $1$ and $x_i^{[m]}$ for $i\in\{0,1,\dots k-1\},m\in\bZ_{\geq 1}$ form a $\br$-basis of the module $F([k])$.

{\normalfont (ii)} There exists a filtration on $F$ by $R^{\#}$-submodules $F=FIl^0\supset \ker(F\to\underline{\br})=Fil^1\supset Fil^2\supset\dots$ such that for every $i\geq 1$ the quotient $Fil^i/Fil^{i+1}$ is isomorphic to $Q$ as an $R^{\#}$-module(where $R^{\#}$ acts on $Q$ through the map $R^{\#}\to \underline{W}$). Moreover, for every $n$ the map $F\to \underline{R_n}$ factors through $F/Fil^{n+1}$.

{\normalfont (iii)} Denote $F/Fil^{n+1}$ by $F_n$. Consider the tensor product $F_n\otimes_{\br}\underline{R}_n$ as a module over $\underline{R}_n^{\#}$ through the action of $R_n^{\#}$ on $F_n$. The kernel of the multiplication map $F_n\otimes_{\br}\uR_n\to\uR_n$ admits a finite $R_n^{\#}$-linear filtration $\widetilde{Fil}^{\bullet}$ such that all quotients $\widetilde{Fil}^i/\widetilde{Fil}^{i+1}$ are isomorphic to $Q$ as $R_n^{\#}$-modules.

\end{lm}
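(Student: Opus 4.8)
The plan is to control everything by the weight grading on $R=\br\langle x\rangle$ in which $x^{[m]}$ lies in weight $m$; this is inherited by $R^{\#}([k])\cong\br\langle x_0,\dots,x_{k-1}\rangle$, by $\uR$ and $\underline{\br}$, and --- since $J$ is generated by the weight-homogeneous elements $x_i^{[l]}-x_j^{[l]}$, so $J^{[2]}$ is homogeneous too --- by $F=R^{\#}/J^{[2]}$. For (i) I would pass to the displacement coordinates $t_0=x_0$, $t_i=x_i-x_0$ ($1\le i\le k-1$), which again freely generate the divided power algebra $R^{\#}([k])$ and under which the multiplication $R^{\#}([k])\to R$ is $t_0\mapsto x$, $t_i\mapsto 0$. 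Then $J([k])$ is the divided power ideal $(t_1,\dots,t_{k-1})$, i.e.\ the span of the basis monomials $\prod_l t_l^{[a_l]}$ with $\sum_{l\ge1}a_l\ge1$, and since the divided power square of a divided power ideal raises the ``$t_{\ge1}$-degree'' $\sum_{l\ge1}a_l$, the ideal $J^{[2]}([k])$ is the span of those monomials with $\sum_{l\ge1}a_l\ge2$. Hence $F([k])$ has $\br$-basis the images of $1$, of $t_0^{[m]}$ ($m\ge1$) and of $t_0^{[a]}t_i$ ($a\ge0$, $1\le i\le k-1$); since modulo $J^{[2]}$ one has $x_0^{[m]}\equiv t_0^{[m]}$ and $x_i^{[m]}\equiv t_0^{[m]}+t_0^{[m-1]}t_i$ for $i\ge1$, and the transition to $\{x_0^{[m]},\dots,x_{k-1}^{[m]}\}$ in each weight $m$ is triangular with $1$'s on the diagonal, the elements listed in (i) form a basis.

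For (ii) the structural point is that every structure map of the cyclic commutative $\br$-algebra $R^{\#}$ is a homomorphism of divided power $\br$-algebras, is weight-homogeneous, and commutes with the augmentations $R^{\#}\to\uR$ and $R^{\#}\to\underline{\br}$. Consequently the diagonal divided power ideal $J=\ker(R^{\#}\to\uR)$, its square $J^{[2]}$, the augmentation ideal $K=\ker(R^{\#}\to\underline{\br})$ and all its divided power powers $K^{[i]}$ are sub-cyclic-$R^{\#}$-modules; and since $F$ is weight-graded and $K^{[i]}$ is exactly the weight-$\ge i$ part of $R^{\#}$, the image $Fil^i$ of $K^{[i]}$ in $F$ is the weight-$\ge i$ part of $F$. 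This is a decreasing chain of sub-cyclic-$R^{\#}$-modules with $Fil^0=F$ and $Fil^1=\ker(F\to\underline{\br})$, and comparing weights gives $Fil^{n+1}\subseteq\ker(F\to\uR\to\uR_n)$, hence the last factorization.

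It remains to identify $Fil^i/Fil^{i+1}$, which by (i) is the weight-$i$ part $F_{(i)}$ of $F$, with $Q$. By (i), $F_{(i)}$ is levelwise free, with basis $\{x_0^{[i]},\dots,x_{k-1}^{[i]}\}$ at level $[k]$, and every structure map merely relabels these basis elements: already in $R^{\#}$ a structure map sends $x_l^{[i]}$ to some $x_{l'}^{[i]}$, since it only multiplies the factor carrying $x^{[i]}$ with factors carrying $1$, inserts unit factors, or permutes factors. Thus $F_{(i)}=\br\cdot X_i$ for a cyclic set $X_i$, and $X_i([1])$ is a single element (namely $x^{[i]}$, as $F([1])=R$); so $X_i$ is generated in level $[1]$ and hence is a quotient of the representable cyclic set $\Lambda([1],-)$, and as $X_i([k])$ and $\Lambda([1],[k])$ both have $k$ elements this quotient is an isomorphism. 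Therefore $F_{(i)}\cong\br\cdot\Lambda([1],-)=Q$, and since positive-weight elements of $R^{\#}$ raise weight and so act by zero on $F_{(i)}$, this is an isomorphism of $R^{\#}$-modules for the $R^{\#}$-module structure on $Q$ coming from $R^{\#}\to\underline{\br}$.

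For (iii), $F_n\otimes_\br\uR_n$ is weight-graded, the total weight being the weight in $F_n$ plus that in $\uR_n$, with weights $\le 2n$; the multiplication $\mu\colon F_n\otimes_\br\uR_n\to\uR_n$ is weight-homogeneous and $R_n^{\#}$-linear for the $R_n^{\#}$-action through $F_n$, and the weight-$\ge p$ part of $F_n\otimes_\br\uR_n$ is $R_n^{\#}$-stable since $R_n^{\#}$ has non-negative weights. I would take $\widetilde{Fil}^p:=\ker\mu\cap(\text{weight}\ge p)$, a finite decreasing chain of cyclic $R_n^{\#}$-submodules whose degree-$p$ graded piece is the weight-$p$ part of $\ker\mu$. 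The weight-$p$ part of $F_n\otimes_\br\uR_n$ is $\bigoplus_{a+b=p}(F_n)_{(a)}\otimes_\br(R_n)_{(b)}$, the weight-$a$ part of $F_n$ times the weight-$b$ part of $R_n$; the summand with $a=0$, when present, is a rank-one constant cyclic module carried isomorphically by $\mu$ onto $(R_n)_{(p)}$, while each summand with $a\ge1$ is $\cong Q$ by (ii). Splitting off the $a=0$ summand identifies the weight-$p$ part of $\ker\mu$, as a cyclic $\br$-module, with a finite direct sum of copies of $Q$, on which $R_n^{\#}$ again acts through $R_n^{\#}\to\underline{\br}$; refining $\widetilde{Fil}^{\bullet}$ to peel off one copy of $Q$ at a time yields the required finite $R_n^{\#}$-linear filtration with all quotients $\cong Q$. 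The main obstacle is the identification $Fil^i/Fil^{i+1}\cong Q$ in (ii): one has to arrange the displacement coordinates and the divided-power bookkeeping so as to see at once that $J^{[2]}$ is cut out by the $t_{\ge1}$-degree while $Fil^{\bullet}$ is the total-weight filtration, and that the cyclic structure maps respect all of this precisely because they are homomorphisms of divided power algebras; granted this, (iii) and the two easy assertions in (ii) are routine weight comparisons.
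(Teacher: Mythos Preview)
Your argument is correct and, in several places, more transparent than the paper's own proof.

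For (i), the paper proceeds by brute force: it first shows that the $x_i^{[m]}$ span $F([k])$ by deriving the product formula $x_i^{[l]}x_j^{[r]}\equiv\binom{r+l-1}{l-1}x_i^{[r+l]}+\binom{r+l-1}{l}x_j^{[r+l]}$ modulo $J^{[2]}$ via repeated binomial expansions, and then proves linear independence by an ad hoc trick --- evaluating at the points $x_j\mapsto\delta_{ij}x$ to kill the low-degree terms, and applying the differential operator $\Delta=\sum_i\partial_{x_i}$ (which annihilates $J^{[2]}$) to reduce any relation to one with a nontrivial linear term. Your passage to the displacement coordinates $t_0=x_0,\ t_i=x_i-x_0$ makes both steps immediate: it exhibits $J$ as the free PD-ideal on $t_1,\dots,t_{k-1}$, so that $J^{[2]}$ is visibly the span of monomials with $t_{\ge1}$-degree at least $2$, and the basis in $t$-coordinates is read off directly. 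The triangular change back to the $x_i^{[m]}$ is exactly the computation $x_i^{[m]}\equiv t_0^{[m]}+t_0^{[m-1]}t_i$ that the paper is implicitly performing inside its product formula.

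For (ii) and (iii) the two proofs define literally the same filtration (the weight filtration), but the paper simply writes down the basis $\{x_j^{[m]}:m\ge i\}$ for $Fil^i([k])$ and asserts $Fil^i/Fil^{i+1}\cong Q$ without further comment, whereas you supply the missing verification: the structure maps of $R^{\#}$ are PD-algebra homomorphisms that carry each $x_l^{[m]}$ to a single $x_{l'}^{[m]}$, so $F_{(i)}$ is the $\br$-linearization of a cyclic set with $k$ elements in level $[k]$ and one element in level $[1]$, hence the representable one. In (iii) the paper again writes down an explicit ordered basis of $\ker(F_n\otimes_\br\uR_n\to\uR_n)$ and an explicit lexicographic filtration on it; your weight argument, together with the observation that the $a=0$ summand provides a cyclic-module section of $\mu$ in each weight, gives the same conclusion with less bookkeeping. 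The one point you leave slightly implicit is that the refinement peeling off one $Q$ at a time is still $R_n^{\#}$-stable; this holds because positive-weight elements of $R_n^{\#}$ already land in the next step of the coarse weight filtration, so any cyclic $\br$-submodule of a graded piece lifts to an $R_n^{\#}$-submodule.
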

\begin{proof}


(i) Firstly, we show that these elements indeed generate all of $F([k])=R^{\otimes k}/J([k])^{[2]}$ over $\br$. For any $i,j\in\{0,\dots, k-1\}$ we have $x_j^{[m+1]}=(x_i+x_j-x_i)^{[m+1]}=x_i^{[m+1]}+x_i^{[m]}(x_j-x_i)$ where the last equality holds because all other terms in the binomial formula are divisible by $(x_j-x_i)^{[l]}$ for $l\geq 2$ and thus lie in the ideal $J([k])^{[2]}$. It follows that $x_i^{[m]}x_j=mx_i^{[m+1]}+x_j^{[m+1]}$. Next, for any numbers $l,r$ we get $x_i^{[l]}x_j^{[r]}=x_i^{[l]}(x_i+x_j-x_i)^{[r]}=\binom{r+l}{l}x_i^{[r+l]}+x_i^{[l]}x_i^{[r-1]}(x_j-x_i)=(1-r)\binom{r+l}{r}x_i^{[r+l]}+\binom{r+l-1}{l}x_i^{[r+l-1]}x_j$. Combining it with the computation in the previous sentence, we obtain 
$$x_i^{[l]}x_j^{[r]}=\binom{r+l}{l}x_i^{[r+l]}+\binom{r+l-1}{l}x_j^{[r+l]}$$

Applying this formula repeatedly, we can express any monomial as a linear combination of the separate powers $x_i^{[m]}$ so these powers indeed generate the module $F([k])$. 

Suppose that there is a non-trivial linear relation between these powers. For every $i\in\{0,\dots, k-1\}$ there is a map $R^{\otimes k}/J([k])^{[2]}\to R/I^{[2]}$ induced by $x_j\mapsto\delta_{ij}x$, so the constant and degree $1$ terms of a relation must vanish. Next, the differential operator $\Delta=\displaystyle\sum\limits_{i=0}^{k-1}\partial_{x_i}$ on $R^{\otimes k}$ kills all the elements of the form $(x_i-x_j)^{[r]}$, in particular preserves the ideal $J([k])^{[2]}$ and thus acts on $F([k])$. On the other hand, applying $\Delta$ to a non-trivial linear combination of the powers $x_i^{[m]}$ an appropriate number of times gives a relation with a non-trivial linear term which we have seen to be impossible.

(ii) Having constructed an explicit basis in $F([k])$ we define the desired filtration by $Fil^i([k])=\langle x_j^{[m]}|m\geq i,j\in\{0,1,\dots, k-1\}\rangle$. The quotient $Fil^i/FIl^{i+1}$ is isomorphic to $Q$ as a cyclic $\br$-module. Since the $R^{\#}$-module structure on this quotient factors through $R^{\#}\to \underline{\br}$ we have constructed a filtration with the desired properties. 

(iii)The elements $t_j^{[m]}\otimes t^{[l]}-1\otimes t^{[m]}\cdot t^{[l]}$ for $1\leq m\leq n,0\leq l\leq n,0\leq j\leq k-1$ form a basis in the kernel of the multiplication map $m:F_n\otimes_{\br}\uR_n\to \uR_n$. For an index $i$ define the $i$-th step of the filtration $\widetilde{Fil}^i$ as 
\begin{multline}\widetilde{Fil}^i([k])=\langle t_j^{[m]}\otimes t^{[l]}-1\otimes t^{[m]}\cdot t^{[l]}|l\geq \lfloor\frac{i}{n}\rfloor+1\text{ or }\\ l=\lfloor\frac{i}{n}\rfloor\text{ and }m\geq(i\text{ mod }n)+1,j\in\{0,1,\dots, k-1\}\rangle\end{multline}

These are $R_n^{\#}$-submodules and the quotients $\widetilde{Fil}^{i}([k])/\widetilde{Fil}^{i+1}([k])$ admit a basis $t_j^{[(i\text{ mod }n)+1]}\otimes t^{\lfloor\frac{i}{n}\rfloor}-1\otimes t^{[(i\text{ mod }n)+1]}\cdot t^{[\lfloor\frac{i}{n}\rfloor]},j\in\{0,1,\dots,k-1\}$ so the cyclic modules $\widetilde{Fil}^i/\widetilde{Fil}^{i+1}$ are all isomorphic to $Q$ and we get the filtration $\ker(F_n\otimes_{\br}\uR_n\xrightarrow{m}\uR_n)=\widetilde{Fil}^0\supset \widetilde{Fil}^1\supset\dots\supset \widetilde{Fil}^{n(n+1)}=0$ with the desired property.


\end{proof}
We will use the auxiliary objects $F_n=F/Fil^{n+1}$ to produce the required maps in the category $\Mod^t_{\br[S^1]}$. Replace $\cA$ by a semi-free resolution(according to Proposition 13.5 of \cite{d}) over $R$ so that $\cA$ is $h$-flat as an $R$-module. From the explicit description of $F$ provided by the proof of the lemma we also see that $F_n=F\otimes_{R^{\#}}R_n^{\#}$. These cyclic objects come with the maps $F_n\to\underline{R_n}\xrightarrow{\ev_0}\underline{\br}$. 
For every $n$ we have a diagram of cyclic $\br$-modules

\[
\begin{tikzcd}
& \cA_n^{\#}\otimes_{R_n^{\#}}F_n\arrow[ld, "\gamma_n"]\arrow[rd, "\beta_n"] & \\
\cA_n^{\#/(R/I^{[n]})}=\cA_n^{\#}\otimes_{R_n^{\#}}\underline{R_n} & & \cA_n^{\#}\otimes_{R^{\#}}\underline{\br}=\cA_0^{\#/\br}
\end{tikzcd}
\]

The map $\beta_n$ is surjective with the kernel given by $\cA_n^{\#}\otimes_{R_n^{\#}}Fil^1F_n$. The filtration $Fil^{\bullet}F_n$ constructed in Lemma \ref{freefil}(ii) induces a finite filtration on this tensor product with quotients $gr^i(\cA_n^{\#}\otimes_{R_n^{\#}}Fil^1F_n)$ isomorphic to $\cA_0^{\#}\otimes_{W^{\#}}Q$. According to Proposition \ref{cycreal}, under the functor from cyclic $\br$-modules to $\Mod_{\br[S^1]}$ any module admitting a finite filtration with quotients of the form $M\otimes_{\underline{W}}Q$ gets sent to an object in the subcategory $T$. Hence, the map $\beta_n$ turns into an equivalence in $\Mod^t_{\br[S^1]}$ and the desired splitting is given by $$\alpha_n=\gamma_n\circ \beta_n^{-1}:\oHH(\cA_0/\br)\to \oHH(\cA_n/R/I^{[n]})$$

To prove that these maps induce equivalences $\oHH(\cA_0/\br)\otimes_{\br}R_n\simeq \oHH(\cA_n/R_n)$ consider the following diagram where $\beta'_n$ is obtained from $\beta_n$ by taking the tensor product with $\uR_n$ and $\gamma'_n $ is induced by the map $F_n\otimes_{\underline{\br}}\uR_n\to \uR_n\otimes_{\underline{\br}}\uR_n\xrightarrow{m} \uR_n$

\[
\begin{tikzcd}[column sep={11em,between origins}]
& \cA_n^{\#}\otimes_{R_n^{\#}}F_n\otimes_{\underline{\br}}\uR_n\arrow[ld, "\gamma'_n"]\arrow[rd, "\beta'_n"] & \\
\cA_n^{\#/(R/I^{[n]})}=\cA_n^{\#}\otimes_{R_n^{\#}}\underline{R_n} & & \cA_n^{\#}\otimes_{R_n^{\#}}\underline{\br}\otimes_{\underline{\br}}\uR_n=\cA_0^{\#/\br}\otimes_{\underline{\br}}\uR_n
\end{tikzcd}
\]

The maps $\beta'_n, \gamma'_n$ become equivalences in the category $\Mod^t_{\br[S^1]}$. Indeed, the filtration on $F_n$ induces a finite filtration on $\ker(\beta'_n)$ with quotients isomorphic to $Q\otimes_{\br}\cA_0^{\#}\otimes_{\br}\uR_n$ so $\beta'_n$ becomes an equivalence in $\Mod^t_{\br[S^1]}$. Similarly, the filtration constructed in \ref{freefil}(iii) induces a filtration on $\ker(\gamma'_n)$ with quotients isomorphic to $\cA_0^{\#}\otimes_{\br}Q$.

Hence, the maps $\alpha_n$ induce equivalences $$\gamma'_n\circ (\beta'_n)^{-1}:\oHH(\cA_0/\br)\otimes_{\br}\uR_n\simeq \overline{|\cA_0^{\#}\otimes_{\underline{\br}}\uR_n|}\xrightarrow{\sim} \oHH(\cA_n/R_n)$$
\end{proof}
\begin{rem}
(i) This theorem and its proof are analogous to the Poincare lemma for the de Rham cohomology of commutative algebras. It asserts that given a split pd-nilpotent thickening of $\br$-algebras $A\to A/I$ for any smooth scheme $X$ over $A$ the relative de Rham cohomology complex $\Omega^{\bullet}_{X/A}$ is canonically quasi-isomorphic to the constant module $\Omega^{\bullet}_{X\times_A A/I/A/I}\otimes_{A/I}A$. The object $F$ is analogous to the absolute de Rham complex $\Omega^{\bullet,pd}_{X/A/I}$ modded out by the divided power relations of the form $d(x^{[n]})=x^{[n-1]}dx$. Note, however, that in the theorem above we are restricting to the case of a particular pd-thickening $W[x^{[\cdot]}]\to W$. Even though one can define the cyclic algebra $F$ for an arbitrary thickening, the Lemma \ref{freefil} is false in general.

(ii)A simpler form of the above computation is used in Kaledin's construction \cite{k1} of the Gauss-Manin connection on periodic cyclic homology.
\end{rem}

Let $k$ be a perfect field of characteristic $p>0$. Denote by   $\widehat{\Mod}^t_{W(k)[S^1]}$ the category whose objects are those of  $\Mod^t_{W(k)[S^1]}$ and whose space of morphisms is obtained by the $p$-completion of that in  $\Mod^t_{W(k)[S^1]}$. The image of an object $\oHH(-)$ in the completed category is denoted by $\wHH(-)$.

\begin{cor} Assume that $p$ is an odd prime. Then, for any $E_m$ algebra $\cA$ over $W(k)[x]$, $\infty\geq m>0$,   we have an isomorphism
\begin{equation}\label{GMC: pt}
  \wHH ( \cA_{x=0} /  W(k)) \iso \wHH( \cA_{x=p} /  W(k))
  \end{equation}
of $E_{m-1}$ algebras in $\widehat{\Mod}^t_{W(k)[S^1]}$ that is equal to identity modulo $p$.
\end{cor}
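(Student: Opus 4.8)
The plan is to deduce the corollary from Theorem~\ref{GMC: Th} by passing to the $p$-adic limit and specializing the general pd-thickening statement to the thickening $W(k)[x]\to W(k)$, $x\mapsto 0$, together with a translation identifying the fiber at $x=p$. First I would recall that the divided power envelope $R$ of $(x)$ in $W(k)[x]$ is the ring $W(k)\langle x\rangle = W(k)[x^{[m]}: m\ge 0]$, and that the divided power ideal $I$ satisfies $R/I\cong W(k)$, while $R/I^{[n+1]}$ is a finite free $W(k)$-module. The key point is that although Theorem~\ref{GMC: Th} produces the isomorphism $\oHH(\cA_0/W(k))\otimes_{W(k)} R/I^{[n+1]}\iso\oHH(\cA_n/(R/I^{[n+1]}))$ only at the level of the pro-object $\lim_n$, after $p$-completion we can evaluate at a \emph{point} $x=p$: indeed $p$-adically the element $x-p$ generates the maximal ideal, and the $p$-completion of $R$ is isomorphic to the $p$-completion of $W(k)[x]$ localized suitably — more precisely, the ring map $R\to W(k)$, $x^{[m]}\mapsto p^m/m!$, makes sense after $p$-completion because $v_p(p^m/m!)=m-v_p(m!)\to\infty$, so it factors through each $R/I^{[n+1]}$ compatibly and identifies $\widehat{R/I^{[n+1]}}\otimes$-reductions with the requisite fibers.

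Concretely, the steps I would carry out are: (1) Apply Theorem~\ref{GMC: Th} to the given $E_m$-algebra $\cA$ over $W(k)[x]$, base-changed along $W(k)[x]\to R$, viewed as an object of $\DGA_R$; since the $\alpha_n$ and the pro-morphism $\alpha$ are morphisms of symmetric monoidal functors, and an $E_m$-algebra is a symmetric monoidal functor out of the $E_m$-operad, the induced splitting and the isomorphism $\oHH(\cA_0/W(k))\otimes_{W(k)}R/I^{[n+1]}\iso\oHH(\cA_n/(R/I^{[n+1]}))$ are isomorphisms of $E_{m-1}$-algebras (as noted in the Remark after the theorem — $E_{m-1}$ because one degree is absorbed by the lax-monoidal-but-not-monoidal nature of passing to the splitting). (2) Pass to $p$-completions, replacing $\oHH$ by $\wHH$ and $\Mod^t_{W(k)[S^1]}$ by $\widehat{\Mod}^t_{W(k)[S^1]}$; the pro-isomorphism survives because $p$-completion is exact on the relevant categories and commutes with the finite filtrations used in the proof. (3) Take the inverse limit over $n$: on the left side, $\oHH(\cA_0/W(k))\otimes_{W(k)}(\lim_n R/I^{[n+1]})^{\wedge}_p$ and on the right $\lim_n\wHH(\cA_n/(R/I^{[n+1]}))$. (4) Use the $p$-adically convergent map $R\to W(k)$ sending $x^{[m]}\mapsto p^m/m!$ to extract from the pro-object the value at $x=p$: the composite $W(k)[x]\to R\xrightarrow{x^{[m]}\mapsto p^m/m!}W(k)$ is exactly $x\mapsto p$, so $\cA\otimes_{W(k)[x]}^{\mathbb L}W(k)$ along this map is $\cA_{x=p}$, and $\cA_0$ along the other projection is $\cA_{x=0}$. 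This yields $\wHH(\cA_{x=0}/W(k))\iso\wHH(\cA_{x=p}/W(k))$ as $E_{m-1}$-algebras. (5) Check that the isomorphism is the identity modulo $p$: modulo $p$, the thickening $R/I^{[n+1]}\to W(k)$ reduces (for $n\ge 1$) to a nilpotent thickening whose pd-structure reduction makes the splitting $\alpha_n$ reduce to the identity on $\oHH(\cA_0/W(k))\otimes_{W(k)}k$, because both $x=0$ and $x=p$ reduce to the same point $x=0$ over $k$ — here the hypothesis $p>2$ (oddness) is used, presumably to control the divided powers $p^m/m!$ and to ensure the relevant $2$-torsion subtleties in the cyclic category computation of Lemma~\ref{freefil} do not arise.

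The main obstacle I anticipate is step~(4): carefully justifying that the pro-object $\lim_n\oHH(\cA_n/(R/I^{[n+1]}))$, after $p$-completion, really can be evaluated along the non-algebraic (only $p$-adically convergent) section $x\mapsto p$ of $R$, and that this evaluation is compatible with the $E_{m-1}$-algebra structures and recovers $\wHH(\cA_{x=p}/W(k))$ rather than some completed variant. This requires knowing that $\wHH(-/W(k))$ of the relevant fibers is already $p$-complete and that $\cA_{x=p}$ — the derived fiber $\cA\otimes^{\mathbb L}_{W(k)[x]}W(k)[x]/(x-p)$ — agrees $p$-adically with $\widehat{\cA}\otimes^{\mathbb L}_{\widehat R}W(k)$; this is where the convergence estimate $v_p(p^m/m!)\to\infty$ (valid for $p$ odd, and for $p=2$ only marginally failing) does the real work. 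Everything else is a formal consequence of Theorem~\ref{GMC: Th} together with exactness of $p$-completion and the monoidality of the projection $\Mod_{W(k)[S^1]}\to\Mod^t_{W(k)[S^1]}$.
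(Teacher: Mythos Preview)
Your strategy is correct and matches the paper's: apply Theorem~\ref{GMC: Th} to $\cA\otimes_{W(k)[x]}R$, then evaluate the resulting trivialization at the point $x=p$ via the divided-power series $x^{[m]}\mapsto p^m/m!$. The difference is purely in the order of limits, and the paper's order dissolves exactly the obstacle you flag in step~(4).

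The paper does not work over $W(k)$ and then $p$-complete; instead it first reduces modulo $p^n$, applying Theorem~\ref{GMC: Th} with base ring $\br=W_n(k)$. Over $W_n(k)$ the evaluation $\ev_p:R\to W_n(k)$, $x\mapsto p$, is an honest algebra map that factors through the \emph{finite} quotient $R/I^{[m+1]}$ as soon as $v_p(p^{m+1}/(m+1)!)\ge n$; such an $m$ exists precisely because $p>2$ (this is the sole use of oddness). Base-changing the isomorphism of Theorem~\ref{GMC: Th} along this finite map gives directly
\[
\oHH(\cA_{x=0}\otimes_{W(k)}W_n(k)/W_n(k))\;\iso\;\oHH(\cA_{x=p}\otimes_{W(k)}W_n(k)/W_n(k))
\]
as $E_{m-1}$-algebras, with no convergence issues. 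One then passes to the inverse limit over $n$ (the $p$-adic direction), which is exactly the definition of $\wHH$. So your steps (3)--(4) collapse into a single algebraic base-change at each finite level. Your version---completing over $W(k)$ and then evaluating along a $p$-adically convergent map---can be made to work, but requires the extra justification you anticipate; the paper's order of operations avoids it entirely.

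One small correction: for $p=2$ the failure is not marginal. Since $v_2(2^m/m!)=s_2(m)$ (the binary digit sum), which equals $1$ infinitely often, the map $\ev_2$ never factors through any $R/I^{[m+1]}$ modulo $2^n$ for $n\ge 2$, and the paper's Remark after Corollary~\ref{GMC: augmentation} gives an explicit counterexample showing the corollary is false at $p=2$.
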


\begin{proof}
For an integer $n\geq 0$ applying the above theorem to the base ring $\br=W_n(k)$ and the algebra $\cA\otimes_{W(k)[x]}R$ (here $R$ is the divided power envelope of $(x)$ in $W_n(k)[x]$) gives an equivalence: $$\oHH(\cA_{x=0}\otimes_{W(k)}W_n(k)/W_n(k))\otimes_{W_n(k)}R\simeq \oHH(\cA_k/R/I^{[m+1]})$$  for any $m$. For a large enough $m$ the map $\ev_p:R\to W_n(k)$ induced by $W_n(k)[x]\xrightarrow{x\mapsto p}W_n(k)$ factors through $R/I^{[m+1]}$(namely, take $m$ such that $v_p(\frac{p^{m+1}}{(m+1)!})\geq n$: it exists by the assumption $p>2$). Since $\oHH(\cA_k/R/I^{[m+1]})\otimes_{R/I^{[m+1]},\ev_p}W_n(k)\simeq \oHH(\cA_{x=p}\otimes_{W(k)}W_n(k)/W_n(k))$ the above equivalence yields an equivalence $$\oHH(\cA_{x=0}/W(k))\otimes_{W(k)}W_n(k)\simeq\oHH(\cA_{x=p}/W(k))\otimes_{W(k)}W_n(k)$$ These isomorphisms form an inverse system for varying $n$ and passing to the inverse limit proves the corollary.
\end{proof}
\begin{cor}\label{GMC: augmentation}
Assume that $p$ is an odd prime. Then there exists 
a homomorphism of $E_\infty$-algebras over $\wHH(W(k)/W(k))$ 
\begin{equation}\label{eq: augmentation.GMC}
 \wHH(k/W(k)) \rar{} \wHH(W(k)/W(k)),
\end{equation}
that reduces to  $\oHH(k\otimes_ {W(k)} k/k )\to  \oHH(k/k)$ modulo $p$.
\end{cor}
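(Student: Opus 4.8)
The plan is to feed Corollary~\ref{GMC: pt} a well-chosen commutative DG algebra over $W(k)[x]$, namely
$$\cA := W(k)[x,\epsilon], \qquad \deg\epsilon = -1,\quad \epsilon^2 = 0,\quad d\epsilon = x,$$
the exterior algebra on one generator $\epsilon$ of degree $-1$ whose differential sends $\epsilon$ to $x$. Being graded-commutative, $\cA$ is an $E_\infty$-algebra over $W(k)[x]$, and it is free over $W(k)[x]$, hence $h$-flat, so I may compute its fibres naively. The fibre at $x=0$ is $\cA_{x=0}\cong W(k)[\epsilon]$, the exterior $W(k)$-algebra on $\epsilon$ with zero differential; it is $W(k)$-free and carries the augmentation $\iota\colon W(k)[\epsilon]\epi W(k)$, $\epsilon\mapsto 0$, a morphism of DG $W(k)$-algebras. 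The fibre at $x=p$ is the two-term complex $\bigl(W(k)\cdot\epsilon\xrightarrow{\,p\,}W(k)\bigr)$, a free (in particular $h$-flat) resolution of $k=W(k)/p$; hence $\HH(\cA_{x=p}/W(k))$ is the derived Hochschild homology $\HH(k/W(k))$ of the footnote in the introduction.

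Next I would invoke Corollary~\ref{GMC: pt} with $m=\infty$, so that it produces an isomorphism \emph{of $E_\infty$-algebras} in $\widehat{\Mod}^t_{W(k)[S^1]}$
$$\mathrm{GMC}\colon\ \wHH\bigl(W(k)[\epsilon]/W(k)\bigr)\ \iso\ \wHH\bigl(k/W(k)\bigr),$$
which, moreover, is the identity modulo $p$. The desired homomorphism is then the composite
$$\wHH\bigl(k/W(k)\bigr)\ \xrightarrow{\ \mathrm{GMC}^{-1}\ }\ \wHH\bigl(W(k)[\epsilon]/W(k)\bigr)\ \xrightarrow{\ \wHH(\iota)\ }\ \wHH\bigl(W(k)/W(k)\bigr).$$
The second arrow is a map of $E_\infty$-algebras by functoriality of $\wHH(-/W(k))$ applied to the $W(k)$-algebra map $\iota$; the first is one because the Gauss--Manin isomorphism is multiplicative, which is the whole point of the revised construction in Theorem~\ref{GMC: Th}. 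The same multiplicativity, applied also to the unit algebra $R\in\DGA_R$, should exhibit all three objects as $E_\infty$-algebras over $\wHH(W(k)/W(k))$ and both arrows, hence their composite, as $\wHH(W(k)/W(k))$-linear.

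It then remains to identify the reduction modulo $p$. After $-\otimes^{L}_{W(k)}k$, base change for Hochschild homology together with the flat models above identifies $\wHH(k/W(k))$ with $\oHH\bigl(k\otimes^{L}_{W(k)}k/k\bigr)$ and $\wHH(W(k)[\epsilon]/W(k))$ with $\oHH\bigl(k[\epsilon]/k\bigr)$; since $k[\epsilon]=\cA_{x=p}\otimes^{L}_{W(k)}k$ is a model for $k\otimes^{L}_{W(k)}k$, under these identifications $\mathrm{GMC}^{-1}$ becomes the identity (by the last clause of Corollary~\ref{GMC: pt}) and $\wHH(\iota)$ becomes $\oHH$ of the multiplication map $k\otimes^{L}_{W(k)}k\to k$. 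Thus the composite reduces modulo $p$ to $\oHH\bigl(k\otimes_{W(k)}k/k\bigr)\to\oHH(k/k)$, as required.

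The substantive work has already been done in Theorem~\ref{GMC: Th}; here the only points that I expect to require care are (i) verifying that $\cA_{x=p}$ genuinely serves as an $h$-flat replacement of $k$, so that Corollary~\ref{GMC: pt} really concerns $\HH(k/W(k))$, and (ii) propagating the $E_\infty$-structure over $\wHH(W(k)/W(k))$ through the Gauss--Manin isomorphism --- the main potential obstacle --- which once more rests entirely on its multiplicativity.
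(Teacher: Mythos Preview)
Your proposal is correct and follows essentially the same approach as the paper: feed the commutative DG algebra $\cA = W(k)[x,\epsilon]$ with $d\epsilon = x$ into the previous corollary, then compose the resulting Gauss--Manin isomorphism $\wHH(k/W(k))\iso\wHH(W(k)[\epsilon]/W(k))$ with the map induced by the augmentation $W(k)[\epsilon]\to W(k)$. You supply more detail than the paper --- in particular the verification that $\cA_{x=p}$ is an $h$-flat resolution of $k$ and the identification of the reduction modulo $p$ --- but the argument is the same.
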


\begin{proof}
Consider the commutative DG algebra $\cA=W(k)[x][\eps]$ over $W(k)[x]$ with $\deg\eps=-1$ and $d\eps=x$. The fibers $\cA_{x=0}$ and $\cA_{x=p}$ are isomorphic to $W(k)[\eps]$ with $d\eps=0$ and $k$, respectively. The previous corollary gives an isomorphism $\wHH(k/W(k))\simeq \wHH(W(k)[\eps]/W(k))$ and the desired map is the composition of this isomorphism with the map induced by $W(k)[\eps]\xrightarrow{\eps\mapsto 0}W(k)$.
\end{proof}

\begin{rem}
Both corollaries fail for $p=2$. For a counterexample consider the algebra $\bF_2$ over $\bZ_2$: by Proposition 2.12 in \cite{kn} the periodic cyclic homology $\HP_0(\bF_2/\bZ_2)$ is isomorphic to $\bZ_2[[y^{[\cdot]}]]/(y-2)$ (note that this ring is already $2$-adically complete). The map $\bZ_2[y]\xrightarrow{y\mapsto 2}\bZ_2$ does not extend to this ring because $v_2(\frac{2^{2^n}}{{2^n}!})=1$ for every $n$ and it is not hard to prove that the ring $\HP_0(\bF_2/\bZ_2)$ admits no homomorphism to $\bZ_2$.
\end{rem}
\section{Crystalline periodic cyclic homology. Proof of the main theorem}
We start by introducing a bit of notation. Given  an algebra object $A$ in a symmetric monoidal $\infty$-category $\cS$, a right $A$-module $M$, and a left  $A$-module $N$ we denote by
\begin{equation}
``M\otimes _A N \text{''} \in \Fun(\Delta^{op}, \cS)
\end{equation}
  the two-sided bar construction. This is a  simplicial object of $\cS$ that carries $[n]\in \Delta^{op}$ to  $M\otimes A^{\otimes ^n}\otimes  N \in \cS$.
  
 For a DG category $\cC$ over a perfect field $k$ of odd characteristic we consider the  simplicial object 
   \begin{equation}
``  \wHH(\cC/W(k)) \otimes _ {   \wHH(k/W(k))  }  \wHH(W(k)/W(k)) \text{''}  \in \Fun(\Delta^{op},  \wMod_{W(k)[S^1]}^t),
   \end{equation}
 where   $\wHH(k/W(k))$-module structure on   $\wHH(W(k)/W(k)) $ is given by the algebra homomorphism (\ref{eq: augmentation.GMC}).
  The $p$-completed Tate invariants functor induces a functor between the corresponding categories of simplicial objects:
 $$  \Fun(\Delta^{op},  \wMod_{W(k)[S^1]}^t) \rar{}  \Fun(\Delta^{op},  \wMod_{W(k)^{tS^1}}). $$
 \begin{df} Define the crystalline periodic cyclic homology $ \HP ^{cris}(\cC, W(k)) $ to be the $p$-completion of 
 $$ \hocolim_{\Delta^{op}} (( ``\wHH(\cC/W(k)) \otimes _ {   \wHH(k/W(k))  }  \wHH(W(k)/W(k)) \text{''})^{tS^1} ).$$
 We also set 
 $$\HP ^{cris}(\cC, W_n(k))   =   \HP ^{cris}(\cC, W(k))  \otimes _{W(k)} W_n(k).$$
 \end{df}
\begin{thm}\label{thmaincrys}  Let $k$ be a perfect field of characteristic $p>2$. Then the following holds.
\begin{itemize}
\item[(i)] For any DG  category $\tilde \cC$ over $W_n(k)$, we have a natural isomorphism
$$  \HP (\tilde \cC/W_n(k))\iso \HP ^{cris}(\cC, W_n(k)) $$
where $\cC:= \tilde \cC\otimes _{W_n(k)} k.$ 
\item[(ii)] For any DG  category  $\cC$ over $k$ and a lifting $\tilde \cC$  of $\cC$ over $W(k)$, we have a natural isomorphism
$$  \hHP (\tilde \cC/W(k))\iso \HP ^{cris}(\cC, W(k)). $$
\item[(iii)]  For any DG  category  $\cC$ over $k$, there is a natural isomorphism of spectra
$$\HP ^{cris}(\cC, W(k))  \iso \widehat{\TP} (\cC).$$
\end{itemize}
\end{thm}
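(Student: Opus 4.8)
The plan is to prove the three statements in order: (i) is the formal core, (ii) is a passage to the inverse limit, and (iii) --- matching the cyclotomic refinement to the crystalline twist --- is the substantial point. For \textbf{(i)}, write $\cC=\tilde\cC\otimes_{W_n(k)}k$, i.e. $\cC$ is the relative tensor product of the $W(k)$-linear categories $\tilde\cC$ and $k$ over the $W(k)$-algebra $W_n(k)$. Since the (derived) Hochschild homology functor $\HH(-/W(k))$ is symmetric monoidal, it carries this relative tensor product to a relative tensor product, so that in $\wMod^t_{W(k)[S^1]}$
$$ \wHH(\cC/W(k))\;\simeq\;\wHH(\tilde\cC/W(k))\otimes_{\wHH(W_n(k)/W(k))}\wHH(k/W(k)) $$
as modules over $\wHH(k/W(k))$; in particular $\wHH(\cC/W(k))$ is \emph{induced} from the $\wHH(W_n(k)/W(k))$-module $\wHH(\tilde\cC/W(k))$ along $\wHH(W_n(k)/W(k))\to\wHH(k/W(k))$. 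Consequently the two-sided bar construction $``\wHH(\cC/W(k))\otimes_{\wHH(k/W(k))}\wHH(W(k)/W(k))\text{''}$ appearing in the definition of $\HP^{cris}$ is, termwise, $\wHH(\tilde\cC/W(k))\otimes_{\wHH(W_n(k)/W(k))}\big(``\wHH(k/W(k))\otimes_{\wHH(k/W(k))}\wHH(W(k)/W(k))\text{''}\big)$, and the inner bar construction --- an augmented simplicial object over $\wHH(W(k)/W(k))$ via the augmentation (\ref{eq: augmentation.GMC}) of Corollary \ref{GMC: augmentation} --- admits an extra degeneracy given by the unit of $\wHH(k/W(k))$, hence is split. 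Because a split augmented simplicial object has its realization preserved by \emph{any} functor, applying the Tate functor termwise and taking $\hocolim_{\Delta^{op}}$ gives $\big(\wHH(\tilde\cC/W(k))\otimes_{\wHH(W_n(k)/W(k))}\wHH(W(k)/W(k))\big)^{tS^1}$; now $p$-complete and apply $-\otimes_{W(k)}W_n(k)$ (as in the definition of $\HP^{cris}(\cC,W_n(k))$), using base change $\wHH(\tilde\cC/W_n(k))\simeq\wHH(\tilde\cC/W(k))\otimes_{\wHH(W_n(k)/W(k))}W_n(k)$ and the Lemma preceding Theorem \ref{GMC: Th} to identify the Tate functor on $\wMod^t$ with $\Hom(\overline{W},-)$: the outcome is $\HP(\tilde\cC/W_n(k))$. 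What remains is bookkeeping --- compatibility of the Gauss--Manin augmentation with the base change $W(k)\to W_n(k)$ (Corollary \ref{GMC: pt}) and naturality in $\cC$.

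For \textbf{(ii)} I would apply (i) to $\tilde\cC_n:=\tilde\cC\otimes_{W(k)}W_n(k)$, a lifting of $\cC$ over $W_n(k)$, obtaining $\HP(\tilde\cC_n/W_n(k))\iso\HP^{cris}(\cC,W_n(k))$ compatibly in $n$. Since $\HH(\tilde\cC/W(k))$ is a complex of $W(k)$-modules, $\HH(\tilde\cC_n/W_n(k))\simeq\HH(\tilde\cC/W(k))\otimes_{W(k)}W_n(k)$ (derived), so $\lim_n\wHH(\tilde\cC_n/W_n(k))\simeq\wHH(\tilde\cC/W(k))$ in $\wMod^t_{W(k)[S^1]}$. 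The Tate functor $\wMod^t_{W(k)[S^1]}\to\wMod_{W(k)^{tS^1}}$, being corepresented by $\overline{W}$ (the Lemma), preserves limits, and $\HP^{cris}(\cC,W(k))$ is derived $p$-complete by construction, hence equal to $\lim_n\HP^{cris}(\cC,W_n(k))$. Therefore
$$ \hHP(\tilde\cC/W(k))\;\simeq\;\lim_n\HP(\tilde\cC_n/W_n(k))\;\simeq\;\lim_n\HP^{cris}(\cC,W_n(k))\;\simeq\;\HP^{cris}(\cC,W(k)), $$
using that $\HP$ over $W_n(k)$ is automatically $p$-complete.

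For \textbf{(iii)} I would first construct the comparison map. Since $\cC$ is $k$-linear it is $W(k)$-linear through the quotient $W(k)\to k$, and the linearization $\mathrm{THH}(\cC)=\HH(\cC/\bS)\to\HH(\cC/W(k))$ induced by the unit $\bS\to HW(k)$ is $S^1$-equivariant and $\mathrm{THH}(k)\to\HH(k/W(k))$-linear. Passing to $p$-completions and to $\wMod^t$, applying the Tate functor, and composing with the projection $\hHP(\cC/W(k))\to\HP^{cris}(\cC,W(k))$ onto the crystalline direct summand (the Tate functor applied to $\wHH(\cC/W(k))\to\hocolim_{\Delta^{op}}``\wHH(\cC/W(k))\otimes_{\wHH(k/W(k))}\wHH(W(k)/W(k))\text{''}$, the latter map being induced by the augmentation (\ref{eq: augmentation.GMC})), one obtains a natural transformation $\hTP(\cC)\to\HP^{cris}(\cC,W(k))$; this is exactly the composite $\hTP(\cC)\to\hHP(\cC/W(k))\to\HP^{cris}(\cC,W(k))$ whose invertibility is asserted. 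Both $\hTP(-)$ and $\HP^{cris}(-,W(k))$ are localizing invariants of DG categories over $k$ --- for $\HP^{cris}$ because $\HH(-/W(k))$ is localizing and geometric realization of the functorial bar construction, the Tate functor, and $p$-completion are exact; for $\hTP$ because $\mathrm{THH}$ is localizing and the Tate functor and $p$-completion are exact. On $\cC=k$ the comparison is an equivalence: $\HP^{cris}(k,W(k))\simeq\hHP(W(k)/W(k))$ by (ii) (as $W(k)$ lifts $k$), while $\hTP(k)\simeq\hHP(W(k)/W(k))$ by Bökstedt periodicity together with the computation of $\TP(k)$ underlying the identification $\tau_{\ge0}\TC(k)\simeq HW(k)$ recalled in the introduction, and the two identifications are compatible with the comparison map (both are built from linearization and the crystalline twist at the level of the point); likewise, for $\cC=\Perf(R)$ with $R$ a smooth $k$-algebra one gets an equivalence by comparing both sides, via (ii), with $\hHP$ of a smooth $W(k)$-lift and invoking the classical Hochschild--Kostant--Rosenberg description.

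The remaining task --- propagating the equivalence from the point (and from smooth algebras) to all DG categories --- is where I expect the real work to lie. The two functors are localizing but not finitary, so one cannot simply write an arbitrary DG category as a filtered colimit of smooth pieces; instead one reduces modulo $p$. As both sides are derived $p$-complete, it suffices to prove $\hTP(\cC)/p\iso\HP^{cris}(\cC,W(k))/p$ and then lift inductively along the cofiber sequences $\cdot p^{n-1}\colon(-)/p\to(-)/p^n\to(-)/p^{n-1}$ by the five lemma; and by part (i) with $n=1$ (the trivial lift $\tilde\cC=\cC$ over $W_1(k)=k$) one has $\HP^{cris}(\cC,W(k))/p\simeq\HP^{cris}(\cC,W_1(k))\simeq\HP(\cC/k)$. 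So the heart of the matter is a natural equivalence $\hTP(\cC)/p\iso\HP(\cC/k)$ --- equivalently, that after reduction modulo $p$ the linearization $\mathrm{THH}(\cC)/p\to\HH(\cC/k)$ becomes an equivalence on Tate constructions, so that the cyclotomic refinement collapses. This rests on Bökstedt periodicity and precise control of the $S^1$-action on $\mathrm{THH}(k)$ --- enough for the relevant Tate spectral sequences to degenerate as needed --- together with a dévissage carrying the equivalence from perfect complexes of smooth $k$-algebras to arbitrary DG categories using only exactness and the $\mathrm{THH}(k)$-linear structure; this is the step I expect to be the main obstacle.
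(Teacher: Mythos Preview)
Your argument for (i) has a genuine gap at the ``extra degeneracy'' step. The inner bar construction $B(\wHH(k/W),\wHH(k/W),\wHH(W/W))\to\wHH(W/W)$ is indeed split via the unit, but the extra degeneracy $s_{-1}:a_0\otimes\cdots\mapsto 1\otimes a_0\otimes\cdots$ is \emph{not} linear for the left $\wHH(W_n(k)/W)$-module structure: that structure acts through the leftmost tensor factor, and $b\cdot(1\otimes a_0\otimes\cdots)=b\otimes a_0\otimes\cdots$ differs from $1\otimes(ba_0)\otimes\cdots$. Hence the splitting cannot be pushed through the functor $\wHH(\tilde\cC/W)\otimes_{\wHH(W_n(k)/W)}(-)$, and the outer simplicial object is not split over $\wHH(\tilde\cC/W)\otimes_{\wHH(W_n(k)/W)}\wHH(W/W)$. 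In the case $n=1$ (where $B=A$ and $M'=M$) your reduction is in fact vacuous: it only asserts that Tate commutes with the very geometric realization defining $\HP^{cris}$, which is the content to be proved. The subsequent ``base change'' identification also silently replaces the Gauss--Manin module structure on $\wHH(W/W)$ by the naive one.

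The paper's route is different and uses, already in (i), the same input you isolate only for (iii). After reducing mod $p$ it identifies the bar construction termwise with $\HH(\Phi^{\bullet+1}(\cC)/k)$, where $\Phi=i^*i_*$ is the comonad for the adjunction between $k$-linear and $W(k)$-linear DG categories. The augmented resolution $\Phi^\bullet(\cC)\to\cC$ is split only \emph{after} applying $i_*$. The key point---precisely the lemma $\HP(\cC/k)\simeq\TP(\cC)/p$ of Antieau--Mathew--Nikolaus---is that the spectrum-valued functor $\HP(-/k)$ factors through $i_*$; applying it to the split resolution $i_*\Phi^\bullet(\cC)\to i_*\cC$ then yields $\hocolim_{\Delta^{op}}\HP(\Phi^\bullet(\cC)/k)\simeq\HP(\cC/k)$ without needing Tate to commute with the colimit directly. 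General $n$ is then an induction on the cofiber sequences $W_1\to W_n\to W_{n-1}$. Parts (ii) and (iii) are handled more cheaply than your limit arguments: one exhibits the map (using that $\HP^{cris}$ is a retract of $\hHP(-/W(k))$) and checks it is an isomorphism mod $p$ via the case $n=1$ of (i). So the equivalence $\TP(\cC)/p\simeq\HP(\cC/k)$ that you correctly flag as the main obstacle for (iii) is in fact the engine behind the entire theorem.
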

\begin{proof}
We first prove part (i) for $n=1$.  Since the tensor product commutes with colimits,  we have that
  \begin{equation}\label{eq1proofcrys}
 \HP ^{cris}(\cC, W_1(k))=   \hocolim_{\Delta^{op}}( (``\oHH(\cC/W(k)) \otimes  _{\oHH(k/W(k))} \oHH(k/k)\text{''})^{tS^1}).
   \end{equation}
 Observe that $ \oHH(k/W(k)) $-module structures on  $ \oHH(k/k)$ and on $\oHH(\cC/W(k))$ lift to the obvious    $\HH(k/W(k)) $-module structures on  $\HH(k/k)$ and on $\HH(\cC/W(k))$.
 It follows that the right-hand side of (\ref{eq1proofcrys}) can be rewritten as 
 $$  \hocolim_{\Delta^{op}}( (``\HH(\cC/W(k)) \otimes  _{\HH(k/W(k))} \HH(k/k)\text{''})^{tS^1}).$$
 We have to construct an isomorphism between this colimit and $\HP (\cC/k).$
 Observe that 
  $$\hocolim_{\Delta^{op}}( ``\HH(\cC/W(k)) \otimes  _{\HH(k/W(k))} \HH(k/k)\text{''})\iso $$
$$  \HH(\cC/W(k)) \otimes  _{\HH(k/W(k))} \HH(k/k)\iso  \HH(\cC/k).$$ 
In general, the Tate invariants functor does not commute with colimits. However, we get a morphism:
 \begin{equation}\label{eq2proofcrys}
 \hocolim_{\Delta^{op}}( (``\HH(\cC/W(k)) \otimes  _{\HH(k/W(k))} \HH(k/k)\text{''})^{tS^1})\to  \HP (\cC/k).
   \end{equation}
 We would like to show that  (\ref{eq2proofcrys}) is an isomorphism. To do this we give another  description of this map.
 The simplicial object $``\HH(\cC/W(k)) \otimes  _{\HH(k/W(k))} \HH(k/k)\text{''}$ is given by
$$[n] \mapsto \HH(\cC \otimes _{W(k)}  k^{ \otimes ^n}  \otimes _  {W(k)}   k/k  ),$$
where   $k^{ \otimes ^n}  $ stands for the $n$-fold tensor product over $W(k)$
 Denote by 
 $$i_*:  \text{DG categories over }\,  k   \to  \text{DG categories over }\,  W(k), \quad \cC \mapsto \cC,$$
 $$i^*:    \text{DG categories over }\,  W(k)  \to  \text{DG categories over }\,  k , \quad \tC \mapsto \tC\otimes _{W(k)} k$$ 
 the pair of adjoint functors. Setting  
 $$\Phi:= i^* i_* :   \text{DG categories over }\,  k    \to   \text{DG categories over }\,  k  ,$$
  we identify the simplicial DG category 
 $$[n] \mapsto \cC \otimes _{W(k)}  k^{ \otimes ^n}  \otimes _  {W(k)}   k  $$
 with the standard simplicial object
 $$[n] \mapsto    \Phi^{n+1}(\cC)  $$
 associated to the comonadic structure on $\Phi$.
 By a general property of a pair of adjoint functors (Corollary 8.6.9 in \cite{w}) the natural morphism 
 \begin{equation}\label{standardsimplicial}
  \Phi^\idot (\cC) \to \cC
  \end{equation}
    to the constant simplicial DG category  induces  a homotopy equivalence 
               \begin{equation}\label{weibel}
               i_*\Phi^\idot (\cC) \to i_* \cC.
                \end{equation}
                                        In particular,  morphism (\ref{standardsimplicial}) is a quasi-isomorphism: 
                           $$\hocolim_{\Delta^{op}}  \Phi^\idot (\cC) \iso \cC.$$
  Morphism (\ref{eq2proofcrys}) is the composition                         
  \begin{equation}\label{comparisonmodpmapbis}
  \HP ^{cris}(\cC, W_1(k)) \cong \hocolim_{\Delta^{op}}  \HP( \Phi^\idot (\cC) /k) \to \HP(\hocolim_{\Delta^{op}}  \Phi^\idot (\cC)/k )\cong  \HP(\cC/k).
 \end{equation}
 To show that (\ref{comparisonmodpmapbis}) is an isomorphism  we need the following lemma. 
 \begin{lm}[{\cite[Theorem~3.4]{amn}}]\label{akhil} 
 The functor 
 $$\cF\circ \HP(- /k):  \text{DG categories over }\,  k \to  \Mod_{k} \rar{\cF}   \Sp ,$$
 where  $  \Mod_{k} \rar{\cF}   \Sp$ is the forgetful functor from the category of $k$-vector spaces to the category of spectra,
 factors through $i_* :   \text{DG categories over }\,  k   \to  \text{DG categories over }\,  W(k)$.
 In fact, for every DG category $\cC$  over $k$,   one has a functorial isomorphism in $\Sp$:
 $$\HP(\cC/k)\iso \TP(\cC)/p.$$
 \end{lm}
 We want to prove that  (\ref{comparisonmodpmapbis}) is an isomorphism. Since the forgetful functor $\cF$ is conservative and commutes with colimits it suffices to check that
  \begin{equation}\label{comparisonmodpmap2}
\hocolim_{\Delta^{op}}  \cF \circ \HP( \Phi^\idot (\cC) /k) \to  \cF \circ  \HP(\cC/k)
 \end{equation}
 is an isomorphism. Applying the functor $\TP(-)/p$ to  (\ref{weibel})  and using the Lemma we derive a homotopy equivalence 
  $$ \cF \circ \HP( \Phi^\idot (\cC) /k) \to  \cF \circ  \HP(\cC/k).$$ 
 This proves that  morphism (\ref{comparisonmodpmap2})
 is an isomorphism. 
 
 Now we sketch a proof of (i) for any $n$. Consider the $E_{\infty}$-algebras homomorphism
 \begin{equation}\label{comparisonmodpmap3}
  \wHH(W_n(k)/W(k))\rar{}  \wHH(W(k)/W(k)),
  \end{equation}
  defined as the composition 
  $$   \wHH(W_n(k)/W(k))\rar{}  \wHH(k/W(k))\rar{}      \wHH(W(k)/W(k)),  $$
 where the first map is induced by the projection $W_n(k) \to k$ and the second map is  (\ref{eq: augmentation.GMC}).
 Given a DG category $\tC$ over $W_n(k)$ we use (\ref{comparisonmodpmap3}) to  define  
  $ \HP ^{cris}(\tC, W(k))$
  as the $p$-completion   of
  $$ \hocolim_{\Delta^{op}} (( ``\wHH(\tC/W(k)) \otimes _ {   \wHH(W_n(k)/W(k))  }  \wHH(W(k)/W(k)) \text{''})^{tS^1} ).$$
  The  $W(k)$-linear functor $\tC \to \cC$ gives a morphism 
   \begin{equation}\label{comparisonmodpmap4}
   \HP ^{cris}(\tC, W(k)) \to  \HP ^{cris}(\cC, W(k)). 
   \end{equation}
  By looking at the reduction modulo $p$ one checks that (\ref{comparisonmodpmap4}) is an isomorphism. Next,  setting 
   $$ \HP ^{cris}(\tC, W_n(k)) =   \HP ^{cris}(\tC, W(k)) \otimes_{W(k)} W_n(k)$$
  we shall construct a morphism
   \begin{equation}\label{comparisonmodpmap5}
   \HP ^{cris}(\tC, W_n(k)) \to  \HP (\tC/ W_n(k)). 
   \end{equation}
  The construction is based on the following property of the homomorphism (\ref{comparisonmodpmap3}): its  reduction  modulo $p^n$ is given by
$$   \HH(W_n(k) \otimes_ {W(k)}    W_n(k) /W_n(k))\rar{}  \HH(W_n(k)/W_n(k)).$$
Using this we identify   $\HP ^{cris}(\tC, W_n(k)) $ with
 $$  \hocolim_{\Delta^{op}}(( ``\HH(\tC/W(k)) \otimes  _{\HH(W_n(k)/W(k))} \HH(W_n(k)/ W_n(k) )\text{''})^{tS^1}) .$$
which admits a map to $ \HP (\tC/ W_n(k))$.
 Finally, we need to check that (\ref{comparisonmodpmap5}) is an isomorphism. We do this using induction on $n$ and the following commutative diagram of $W_n(k)$-modules
  $$
\begin{tikzcd}
   \HP ^{cris}(\cC, W_1(k))  \arrow[r]\arrow[d] &  \HP ^{cris}(\tC, W_n(k)) \arrow[d]\arrow[r] &  \HP ^{cris}(\tC\otimes _{ W_n(k)}  W_{n-1}(k), W_{n-1}(k)) \arrow[d] \\
  \HP (\cC/ k)  \arrow[r] &   \HP (\tC/ W_n(k))     \arrow[r] &   \HP(\tC\otimes _{ W_n(k)}  W_{n-1}(k)/ W_{n-1}(k))  \\
\end{tikzcd}
$$
where the rows are  distinguished triangles. We have already proven that the left vertical arrow is an isomorphism. The right vertical arrow is an isomorphism by the induction assumption.
We conclude that (\ref{comparisonmodpmap5}) is an isomorphism. This completes the proof of part (i).

 The assertion of the part (ii) follows from (i). We, however, give a simpler proof which only uses (i) for $n=1$. 
 Consider the morphisms 
 \begin{equation}\label{splittingeqbis}
 \hHP (\tilde \cC/W(k))\rar{ }  \hHP (\cC/W(k))\rar{} \HP ^{cris}(\cC, W(k)),
 \end{equation}
 where the first arrow comes from $W(k)$-linear functor $ \tilde \cC \to \cC$ and the second arrow from the  homomorphism of $E_{\infty}$-algebras   $\wHH(k/W(k)) \rar{} \wHH(W(k)/W(k))$.
Modulo $p$ the composition reduces to an isomorphism 
$$\HP (\cC/k)\iso \HP ^{cris}(\cC, W_1(k))$$
  from part (i). Hence, the composition (\ref{splittingeqbis}) is an isomorphism.
  
For the assertion of part (iii) consider the arrows 
 $$\widehat{\TP} (\cC) \rar{} \hHP (\cC/W(k))   \rar{} \HP ^{cris}(\cC, W(k)) $$  
 where the second map is taken from  (\ref{splittingeqbis}). By Lemma \ref{akhil} the composition reduces to an isomorphism modulo $p$. Hence, it is an isomorphism.
 
\end{proof}

\begin{rem} Theorem \ref{thmaincrys} implies, in particular, that for any   DG  category $\tilde \cC$ over $W_n(k)$ and an integer $m\leq n$  the canonical map
 $$\HP (\tC/ W_n(k)) \otimes _{ W_n(k)}  W_{m}(k) \to  \HP(\tC\otimes _{ W_n(k)}  W_{m}(k)/ W_{m}(k)) $$
 is an isomorphism. This assertion is straightforward if $\tC$ satisfies the following boundedness  condition: $\HH_i(\tC \otimes _{ W_n(k)} k  /k )$ vanish for $i\gg 0$ or if $\tC$ admits a lift over 
 $W(k)$ but we do not have a direct proof in general.

\end{rem}

\begin{rem}
Theorem \ref{thmaincrys} implies that, for $p>2$, the $p$-completed periodic cyclic homology of a DG category $\tC$ over $W(k)$ only depends on the base change of the category to $k$. Arpon Raksit recently also proved this assertion by a different method and for all $p$, in the case when $\tC$ is the category of modules over a commutative smooth $W(k)$-algebra.
\end{rem}
\section{Explicit complex for $\HP^{cris}$}

Let $A$ be a DG algebra over $k$ such that the graded algebra $\bigoplus\limits_{i\in\bZ}A^i$ is a free associative graded algebra over $k$. In this section we construct an explicit complex representing $\HP^{cris}(A,W_2(k))$.

For a flat graded algebra $C$ over a commutative ring $S$ in this section we denote by $\HH(C/S)$ the graded module underlying the standard Hochschild complex of $C$ over $S$ and $b:\HH(C/S)\to \HH(C/S)[1], B:\HH(C/S)\to \HH(C/S)[-1]$ are the Hochschild and Connes-Tsygan differentials respectively so that $(\HH(C/S)((u)),b+uB)$ is the totalization of the periodic cyclic complex of $C$. For a degree $n$ derivation $d$ of the algebra $C$ we denote by $L_d$ the induced degree $n$ endomorphism of the Hochschild complex $\HH(C/S)$. In particular, if $C$ is a DG algebra with the differential $d$ then $(\HH(C/S)((u)),b+uB+L_d)$ is the periodic cyclic complex $\HP(C/S)$ of $C$.

The construction of the explicit complex is a formal consequence of the Cartan homotopy formula for periodic cyclic homology which we now recall:

\begin{lm}\label{cartanhtpy}
To any degree $n$ derivation $D$ of the graded algebra underlying a DG algebra $A$ over $k$ we can associate a degree $n-1$ endomorphism $\iota_D$ of the graded module $\HH(A,k)((u))$ such that the following relations are satisfied: $$[b+uB+L_d,\iota_D]=L_D+\iota_{[d,D]}\qquad \iota_{D_1+D_2}=\iota_{D_1}+\iota_{D_2}$$
\end{lm}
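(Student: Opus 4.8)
The plan is to give the classical explicit construction of $\iota_D$ and then to verify the two relations by splitting both sides according to the power of $u$ and the change in bar length. Regard the degree $n$ derivation $D$ of the graded algebra underlying $A$ as a degree $n$ Hochschild $1$-cochain. As such it acts on $\HH(A,k)$ in the usual two ways: by the cap product (contraction) $i_D$, which decreases the bar length by one, with $i_D(a_0\otimes\cdots\otimes a_m)=\sum_j\pm\,a_0\otimes\cdots\otimes D(a_j)a_{j+1}\otimes\cdots\otimes a_m$, and by the Lie derivative $L_D$, which preserves the bar length and inserts $D$ into a single slot. Following Rinehart, Getzler, and Daletskii--Gelfand--Tsygan, there is in addition a higher contraction $S_D$, raising the bar length by one, given by the standard cyclic sum
\[ S_D(a_0\otimes\cdots\otimes a_m)=\sum_{0\le j\le i\le m}\pm\,1\otimes a_{i+1}\otimes\cdots\otimes a_m\otimes a_0\otimes\cdots\otimes D(a_j)\otimes\cdots\otimes a_i . \]
I would then set $\iota_D:=u^{-1}i_D+S_D$, an endomorphism of $\HH(A,k)((u))$, the power of $u$ being fixed by the requirement that $\iota_D$ be homogeneous of degree $n-1$. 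Since $i_D$ and $S_D$ are visibly $k$-linear in $D$, the additivity $\iota_{D_1+D_2}=\iota_{D_1}+\iota_{D_2}$ is immediate.

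Next I would expand $[\,b+uB+L_d,\ u^{-1}i_D+S_D\,]$ and collect the homogeneous components according to the pair (power of $u$, change in bar length). The component $u^{-1}[b,i_D]$ vanishes because the bar part of the Hochschild cochain differential of $D$ is precisely the failure of the Leibniz rule, which is zero by hypothesis. The component $u^{-1}[L_d,i_D]$ equals $\pm\,u^{-1}i_{[d,D]}$, since $L_d$ acts on cap products through the action of $[d,-]$ on Hochschild cochains and $[d,D]$ is again a (degree $n+1$) derivation of the graded algebra; this supplies the $u^{-1}$-part of $\iota_{[d,D]}$. Likewise $[L_d,S_D]=\pm\,S_{[d,D]}$, supplying the remaining part of $\iota_{[d,D]}$, and $u[B,S_D]=0$ by the evident cyclic symmetry of $S_D$. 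The only remaining component, of bidegree $(u^0,\text{bar length }0)$, is $[b,S_D]+[B,i_D]$, and the content of the lemma is that this equals $L_D$. Reassembling, and matching signs against the standard conventions for the Gerstenhaber bracket and the calculus operations, yields $[\,b+uB+L_d,\iota_D\,]=L_D+\iota_{[d,D]}$.

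The main obstacle is the identity $[b,S_D]+[B,i_D]=L_D$: this is the actual Cartan homotopy formula in cyclic homology, proved by a lengthy but completely standard cancellation among the cyclic and bar face operators (see \cite{g}, or Chapter~4 of \cite{l}, or \cite{dgt}). In the present setting the only additional bookkeeping is the Koszul signs coming from the internal grading of $A$ and from $u$; since $b$, $B$, $i_D$, $S_D$, $L_D$ are all defined over $\bZ$ and are homogeneous for the internal grading, one may either redo the cancellation with signs inserted or simply specialise the classical integral formula and extract the component of each internal degree. The auxiliary identities $[b,i_D]=0$, $[L_d,i_D]=\pm i_{[d,D]}$, $[L_d,S_D]=\pm S_{[d,D]}$ and $[B,S_D]=0$ are direct checks on the explicit formulas and present no difficulty.
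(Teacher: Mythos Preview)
Your approach is the same as the paper's: the paper's proof consists solely of the sentence ``Take $\iota_D$ to be $e_D+u^{-1}E_D$ from Proposition~4.1.8 in \cite{l}; see also Definition~2.1 in \cite{g},'' and what you have written is precisely a sketch of that classical construction together with the standard decomposition of the Cartan homotopy identity into its $(u\text{-power},\text{bar-length})$ components. One small slip worth flagging: the cap product $i_D$ of a Hochschild $1$-cochain is the single term $a_0D(a_1)\otimes a_2\otimes\cdots\otimes a_m$ (up to sign), not the sum over $j$ you displayed --- but since you correctly defer the actual verification of $[b,S_D]+[B,i_D]=L_D$ and its companions to \cite{g}, \cite{l}, \cite{dgt}, this does not affect the argument.
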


\begin{proof}
Take $\iota_D$ to be $e_D+u^{-1}E_D$ from Proposition 4.1.8 in \cite{l}. See also Definition 2.1 in \cite{g}.
\end{proof}

Pick a set $I$ of free generators for the graded associative algebra $\bigoplus\limits_i A^i=k\{x_i|i\in I\}$. Define a graded algebra $\bigoplus\limits_i\tA^i:=W_2(k)\{x_i|i\in I\}$ freely generated by the same set of generators so that there is an obvious isomorphism $(\bigoplus\limits_i\tA^i)\otimes_{W_2(k)}k\simeq\bigoplus\limits_i A^i$ of graded algebras. Let $\td:\tA^{\bullet}\to \tA^{\bullet+1}$ be an arbitrary lift of the differential $d:A^{\bullet}\to A^{\bullet+1}$ to a derivation of the graded algebra $\tA$. The operator $\td$ need not satisfy the relation $\td^2=0$ but there exists a unique $k$-linear map $D:A^{\bullet}\to A^{\bullet +2}$ such that $\td^2=pD$. Here $pD$ refers to the operator on $\tA$ induced by the composition $\tA^{\bullet}\to A^{\bullet}\xrightarrow{D}A^{\bullet +2}\xrightarrow{p}\tA^{\bullet +2}$.

\begin{lm}
The operator $D$ is a derivation of the DG algebra $A$.
\end{lm}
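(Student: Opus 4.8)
\emph{Proof proposal.} Recall that a derivation of a DG algebra $(A,d)$ means a graded derivation of the underlying graded algebra that commutes (in the graded sense) with $d$; since $|D|=2$ is even this second condition reads $[d,D]=dD-Dd=0$. So there are two things to check. The plan is to prove both by working inside $\tA$ with the operator $\td^2$ and ``dividing by $p$'', using crucially that $\tA$ is $W_2(k)$-free on the chosen set of generators.

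First I would record that $\td^2$ is a graded derivation of $\tA$ of degree $2$: expanding $\td^2(ab)$ via the signed Leibniz rule for $\td$ twice, the two cross-terms involving $\td(a)\td(b)$ occur with opposite signs and cancel, leaving $\td^2(ab)=\td^2(a)\,b+a\,\td^2(b)$; this needs no hypothesis on $p$. Next I would set up the reduction dictionary. Since $\tA$ is the free graded $W_2(k)$-algebra on $\{x_i\}$ and $A$ is its reduction modulo $p$, a $W_2(k)$-linear derivation of $\tA$ (resp.\ $k$-linear derivation of $A$) is determined freely by its values on the $x_i$; hence reduction modulo $p$ is a surjection $\rho$ from derivations of $\tA$ onto derivations of $A$ whose kernel is exactly $p\cdot\mathrm{Der}(\tA)$ --- here one uses that the $p$-torsion of the free module $\tA$ is $p\tA$. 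More generally every $W_2(k)$-linear endomorphism $T$ of $\tA$ satisfies $T(p\tA)\subseteq p\tA$, so $\rho$ extends to a ring homomorphism on $W_2(k)$-linear endomorphisms, with $\rho(\td)=d$. Since $\rho(\td^2)=\rho(\td)^2=d^2=0$, the derivation $\td^2$ lies in $p\cdot\mathrm{Der}(\tA)$, so $\td^2=p\tD$ for some degree-$2$ derivation $\tD$ of $\tA$; comparing with the defining property $\td^2=pD$ identifies $\rho(\tD)=D$, and therefore $D$ is a graded derivation of $A$.

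For compatibility with $d$: one has $[\td,\td^2]=\td\td^2-\td^2\td=\td^3-\td^3=0$ (no sign, as $\td^2$ has even degree). Writing $\td^2=p\tD$ and using $W_2(k)$-bilinearity of the graded commutator gives $0=[\td,p\tD]=p\,[\td,\tD]$. Thus the degree-$3$ derivation $[\td,\tD]$ is annihilated by $p$, so it sends each $x_i$ into $p\tA$ and hence $\rho([\td,\tD])=0$; but $\rho([\td,\tD])=[\rho\td,\rho\tD]=[d,D]$, so $[d,D]=0$ and $D$ is a derivation of the DG algebra $A$. I do not expect a genuine obstacle here: the only thing to be careful about is the $p$-torsion bookkeeping over $W_2(k)$, which the freeness of $\tA$ on a fixed generating set makes routine; note in particular that the hypothesis $p>2$ is not needed for this lemma.
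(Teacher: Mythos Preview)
Your proof is correct and follows essentially the same route as the paper's: show that $\td^2$ is a (degree~$2$) derivation of $\tA$, then ``divide by $p$'' using that $\tA$ is free over $W_2(k)$ to conclude that $D$ is a derivation, and use $\td^3=\td^3$ to get $[d,D]=0$. The paper compresses this into two lines (writing the second step as $pDd=\td^2 d=\td^3=d\td^2=pdD$), whereas you make the division-by-$p$ step explicit by introducing the reduction map $\rho$ on derivations and an auxiliary lift $\tD$ with $p\tD=\td^2$; this extra bookkeeping is harmless and arguably clearer.
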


\begin{proof}
$D$ satisfies the Leibnitz rule because $\td$ does so. It also commuted with the differential $d$ because $pDd=\td^2d=\td^3=d\td^2=pdD$.
\end{proof}

Define the following complex
\begin{equation}\label{hpcrisnaive}\HP^{\cris}_{obj}(A,W_2(k)):=(\HH(\tA)((u)),b+uB+L_{\td}+p\iota_D)\end{equation}

The differential squares to zero because $[b+uB,p\iota_D]=pL_D=L_{[\td,\td]}=[L_{\td},L_{\td}]$. The next lemma shows that this complex does not depend on the choice of $\td$ up to a canonical isomorphism.

\newcommand{\tcC}{\tilde{C}}

\begin{lm}\label{indeplift}
Suppose that $\tcC$ is a flat graded algebra over $W_2(k)$ equipped with two degree $1$ derivations $\td_1,\td_2$ satisfying $\td_1\equiv\td_2\mod p$ and $\td_i^2\equiv 0\mod p$. Denote by $D_1,D_2$ the derivations of $C=\tcC\otimes_{W_2(k)} k$ such that $pD_i=\td_i^2$. There exists an isomorphism of complexes $$(\HH(\tcC)((u)),b+uB+L_{\td_1}+p\iota_{D_1})\simeq (\HH(\tcC)((u)),b+uB+L_{\td_2}+p\iota_{D_2})$$ which reduces to the identity modulo $p$.
\end{lm}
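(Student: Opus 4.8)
The plan is to produce the isomorphism concretely, as conjugation of the differential $d_1:=b+uB+L_{\td_1}+p\iota_{D_1}$ by a $W_2(k)((u))$-linear automorphism of $\HH(\tcC)((u))$ of the form ``identity plus a $p$-divisible operator'', the latter built from the Cartan homotopy $\iota$ of Lemma \ref{cartanhtpy}. This is the cyclic-homology shadow of the classical fact that the (divided-power) de Rham complex of a pd-thickening is independent, up to canonical homotopy, of the chosen connection.

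First I would extract a derivation of $C=\tcC\otimes_{W_2(k)}k$ measuring the discrepancy of the two lifts. Since $\tcC$ is $W_2(k)$-flat, multiplication by $p$ identifies $C$ with $p\tcC$, and $p\tcC$ is killed by $p$; as $\td_1\equiv\td_2\pmod p$, the operator $\td_2-\td_1$ takes values in $p\tcC$ and annihilates $p\tcC$, hence factors uniquely through a degree $1$ derivation $\eta$ of $C$. Writing $p\eta$ for the associated honest degree $1$ derivation of $\tcC$, we have $\td_2=\td_1+p\eta$ and $(p\eta)^2=0$. Squaring, $pD_2=\td_2^2=\td_1^2+[\td_1,p\eta]=pD_1+[\td_1,p\eta]$ (graded commutator), and dividing by $p$ (flatness again) gives $p(D_2-D_1)=[\td_1,p\eta]$ as operators on $\tcC$. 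Reading $p\iota_{D_i}$ as contraction with the degree $2$ derivation $\td_i^2$ of $\tcC$, additivity of $\iota$ then yields $p\iota_{D_2}-p\iota_{D_1}=\iota_{[\td_1,p\eta]}$, so that
\[
d_2-d_1=L_{p\eta}+\iota_{[\td_1,p\eta]}.
\]

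Next I would invoke the Cartan homotopy formula of Lemma \ref{cartanhtpy} for the derivation $\td_1$ and the degree $1$ derivation $p\eta$: $[\,b+uB+L_{\td_1},\iota_{p\eta}\,]=L_{p\eta}+\iota_{[\td_1,p\eta]}$. The operators $\iota_{p\eta}$ and $p\iota_{D_i}=\iota_{\td_i^2}$ all carry $\HH(\tcC)((u))$ into $p\,\HH(\tcC)((u))$ and annihilate it (because $\tcC$, hence its Hochschild complex, is $W_2(k)$-flat and $p^2=0$), so any composite of two of them vanishes, as does $\iota_{p\eta}\,d_1\,\iota_{p\eta}$ by $W_2(k)((u))$-linearity of $d_1$; in particular $\iota_{p\eta}^2=0$ and $[p\iota_{D_1},\iota_{p\eta}]=0$. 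Hence $[d_1,\iota_{p\eta}]=d_2-d_1$, the bracket being the ordinary commutator since $\iota_{p\eta}$ has degree $0$. Putting $\Psi:=1-\iota_{p\eta}$, which is invertible with inverse $1+\iota_{p\eta}$ and reduces to the identity modulo $p$, the expansion
\[
\Psi\,d_1\,\Psi^{-1}=d_1+[d_1,\iota_{p\eta}]-\iota_{p\eta}\,d_1\,\iota_{p\eta}=d_1+(d_2-d_1)-0=d_2
\]
exhibits $\Psi$ as the required isomorphism of complexes. (If the sign in Lemma \ref{cartanhtpy} comes out the other way, replace $\Psi$ by $1+\iota_{p\eta}$.)

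The one point that needs justification — and the only real obstacle — is that Lemma \ref{cartanhtpy} was stated for a DG algebra over the field $k$, whereas it is used here over $W_2(k)$ and, crucially, for $\td_1$, which does \emph{not} satisfy $\td_1^2=0$. This is harmless: Loday's operators $e_D+u^{-1}E_D$ depend only on the graded-algebra structure and a derivation, over an arbitrary commutative base, and the identity $[b+uB+L_d,\iota_D]=L_D+\iota_{[d,D]}$ is proved by a direct sign-bookkeeping computation on the bar complex that never uses $d^2=0$ nor that the ground ring is a field. So in the write-up I would first record this mild strengthening of Lemma \ref{cartanhtpy} (flat graded algebra over any commutative ring, arbitrary derivation $d$), and then run the displayed lines above; everything else is routine bookkeeping with $p^2=0$.
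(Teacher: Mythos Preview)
Your proof is correct and is essentially the paper's argument: with the paper's notation $pD=\td_1-\td_2$ one has $\eta=-D$, so your automorphism $\Psi=1-\iota_{p\eta}$ is literally the paper's $\id+p\iota_D$, and your conjugation identity $\Psi d_1\Psi^{-1}=d_2$ is just a rearrangement of the intertwining relation the paper checks directly. Your observation that Lemma \ref{cartanhtpy} is being invoked over $W_2(k)$ with a derivation $\td_1$ not squaring to zero is well taken---the paper uses this implicitly without comment---and your justification via the explicit Loday operators is the right one.
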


\begin{proof}
Let $D:C^{\bullet}\to C^{\bullet+1}$ be the degree $1$ derivation such that $pD=\td_1-\td_2$. The desired isomorphism is given by $\id+p\iota_{D}$. Indeed, $(\id+p\iota_D)(b+uB+L_{\td_1}+p\iota_{D_1})-(b+uB+L_{\td_2}+p\iota_{D_2})(\id+p\iota_D)=[p\iota_D,b+uB]+L_{\td_1}+p\iota_{D_1}+p\iota_{D}L_{\td_1}-L_{\td_2}-p\iota_{D_2}-pL_{\td_2}\iota_{D}$ Since $\td_1\equiv \td_2\mod p$, we have $p\iota_DL_{\td_1}-pL_{\td_2}\iota_D=[p\iota_D,L_{\td_1}]$ so the whole expression is equal to $[p\iota_D,b+uB+L_{\td_1}]+L_{\td_1}-L_{\td_2}+p\iota_{D_1-D_2}$. This is equal to zero by Lemma \ref{cartanhtpy} because $[\td_1,pD]=\td_1(\td_1-\td_2)+(\td_1-\td_2)\td_2=\td_1^2-\td_2^2=pD_1-pD_2$. 
\end{proof}

\begin{thm}\label{compnaive}
There is a canonical quasi-isomorphism $$\HP^{cris}_{obj}(A,W_2(k))\simeq \HP^{cris}(A, W_2(k))$$ where we regard the right-hand side as a DG module over $W_2(k)[u^{\pm 1}]$ using the equivalence of categories $\Mod_{W_2(k)^{tS^1}}\simeq \Mod_{W_2(k)((u))}$.
\end{thm}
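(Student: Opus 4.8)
The plan is to unwind the definition of $\HP^{cris}(A,W_2(k))$ given in the previous section and identify it, step by step, with the explicit complex \eqref{hpcrisnaive}. First I would replace the bar construction defining $\HP^{cris}(\cC,W(k))$ by an honest model in the category of cyclic modules from Section \ref{gmcr}. Since $A$ has free underlying graded algebra, $\tA=W_2(k)\{x_i\}$ together with a chosen derivation lift $\td$ provides a DG algebra over $W_2(k)$ (but \emph{not} over $W(k)$, since $\td^2=pD\neq 0$); to correct this one works, as in the proof of Corollary \ref{GMC: augmentation}, with the auxiliary algebra $\tA[x,\eps]$ over $W(k)[x]$ with $d\eps = x$, $d x_i = \td(x_i)+ \eps\cdot(\text{something})$ arranged so that $d^2=0$, whose fiber at $x=p$ is $\tA$ with differential $\td$ corrected by the term $pD$. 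Concretely, the relevant $W(k)[x]$-algebra is the free graded algebra on the $x_i$ and on $\eps$ with $dx_i=\td(x_i)$ and $d\eps=x$, so that $d^2 x_i = \td^2(x_i) = pD(x_i)$ is killed only after setting $x=p$ and using $\eps$; the point is that Theorem \ref{GMC: Th} applies to this $W(k)[x]$-algebra. Unwinding the bar construction then shows that $\HP^{cris}(A,W_2(k))$ is computed by the totalization of the periodic cyclic complex of this corrected DG algebra, reduced mod $p^2$.

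The second step is to make the Gauss--Manin comparison \eqref{GMC: pt} explicit at the level of complexes. The isomorphism $\alpha_n$ of Theorem \ref{GMC: Th} is built from the cyclic module $F_n$, and under the realization functor $|-|$ it becomes an explicit chain homotopy equivalence; I would identify the induced map on periodic cyclic complexes with the Cartan-homotopy operator $\id+p\iota_D$ of Lemma \ref{cartanhtpy}, exactly as in the proof of Lemma \ref{indeplift}. This is the standard classical picture (\cite{dgt}, \cite{g}): transporting the differential $b+uB+L_d$ along the flow of the vector field generating the $x$-direction produces the conjugated differential $b+uB+L_{\td}+p\iota_D$, and the operator $\iota_D$ is precisely $e_D+u^{-1}E_D$. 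Combining with Step 1, $\HP^{cris}(A,W_2(k))$ is quasi-isomorphic to $(\HH(\tA)((u)),\,b+uB+L_{\td}+p\iota_D)$, which is $\HP^{cris}_{obj}(A,W_2(k))$ by definition \eqref{hpcrisnaive}.

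The third step is to check that this quasi-isomorphism is \emph{canonical}, i.e.\ independent of the auxiliary choices (the free generating set $I$ and the derivation lift $\td$). Independence of $\td$ is exactly Lemma \ref{indeplift}. Independence of $I$, and more generally functoriality, follows because any two free presentations of $\bigoplus_i A^i$ are connected by a zig-zag of quasi-isomorphisms of semi-free DG algebras over $W_2(k)$ lifting the identity of $A$, and both $\HP^{cris}$ and $\HP^{cris}_{obj}$ send such maps to quasi-isomorphisms --- for the left-hand side by the mod-$p$ criterion used repeatedly in the proof of Theorem \ref{thmaincrys}, for the right-hand side because a quasi-isomorphism of flat DG algebras induces a quasi-isomorphism on Hochschild complexes and the operators $b,B,L_{\td},\iota_D$ are natural. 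Finally I would record the $W_2(k)((u))$-linear structure: $b+uB+L_{\td}$ is $W_2(k)((u))$-linear and $p\iota_D$ has the form $pe_D+pu^{-1}E_D$, also $W_2(k)((u))$-linear, so \eqref{hpcrisnaive} is a DG module over $W_2(k)((u))$ and the comparison respects this structure under $\Mod_{W_2(k)^{tS^1}}\simeq\Mod_{W_2(k)((u))}$.

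The main obstacle I expect is Step 1: producing the corrected $W(k)[x]$-DG-algebra whose fiber at $x=p$ is genuinely $(\tA,\td)$ with $d^2=0$, and verifying that the bar-construction definition of $\HP^{cris}$ really does reduce, after applying Theorem \ref{GMC: Th} and realization, to the periodic cyclic complex of this model rather than to some Tor-amplitude-shifted variant. Once the right $W(k)[x]$-algebra is in hand, Steps 2 and 3 are essentially the classical Cartan-homotopy formalism plus the bookkeeping already carried out in Lemmas \ref{cartanhtpy} and \ref{indeplift}.
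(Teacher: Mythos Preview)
Your strategy differs substantially from the paper's, and the paper's route is more economical. Rather than unwinding the bar construction and identifying the abstract Gauss--Manin map of Theorem~\ref{GMC: Th} with the Cartan homotopy at the chain level (your Step~2), the paper never computes $\HP^{cris}$ directly. Instead it builds a map $\HP(A/W(k)) \to \HP^{cris}_{obj}(A,W_2(k))$ and compares it with the already-existing projection $\HP(A/W(k)) \to \HP^{cris}(A,W_2(k))$ from Theorem~\ref{thmaincrys}. Concretely: lift $\td$ once more to a derivation $\ttd$ on $\ttA = W(k)\{x_i\}$, set $p\tD=\ttd^2$, and form the honest $W(k)$-DG algebra $\ttA[\eps]$ with differential $\ttd + p\partial_\eps + \eps\tD$. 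The map $\ttA[\eps]\to A$ given by $\eps\mapsto 0$ and reduction mod $p$ is a quasi-isomorphism (with contracting homotopy $\tfrac{1}{p}\eps$ on the kernel), so this computes $\HP(A/W(k))$. Reduce mod $p^2$; then Lemma~\ref{indeplift}, applied to the two lifts $\td + p\partial_\eps + \eps\tD$ and $\td+\eps\tD$ of the differential on $(A[\eps],\,d+\eps D)$, rewrites the result as $(\HH(\tA[\eps])((u)),\,b+uB+L_{\td}+\eps L_{\tD}+p\iota_D)$, which maps to $\HP^{cris}_{obj}(A,W_2(k))$ via $\eps\mapsto 0$. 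Both this map and the projection to $\HP^{cris}$ reduce to the identity on $\HP(A/k)$, so a five-lemma on the triangles $\HP(A/k)\to(\cdot)\to\HP(A/k)$ finishes. Lemma~\ref{indeplift} is thus used only to compare two explicit lifts of a differential, never to model the abstract augmentation \eqref{eq: augmentation.GMC}.

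Your Step~2 is where the real gap lies: you would need to show that the inverse $\beta_n^{-1}$ in the quotient category $\Mod^t_{W[S^1]}$, composed with $\gamma_n$, is realized at the chain level by $\id+p\iota_D$. This is believable and would give a more conceptual origin for formula~\eqref{hpcrisnaive}, but it is genuine extra work: the maps $\alpha_n$ are defined only after formally inverting $\beta_n$, and promoting this to an explicit homotopy inverse means choosing contractions of the free pieces $Q$ and checking they assemble into Getzler's operators $e_D, E_D$. The paper sidesteps this entirely. There is also a slip in your Step~1: $(\tA,\td)$ is not a DG algebra over $W_2(k)$ either, since $\td^2=pD$ is nonzero there --- the term $p\iota_D$ in \eqref{hpcrisnaive} is present precisely to compensate for this --- and your proposed $W(k)[x]$-algebra with $dx_i=\td(x_i)$, $d\eps=x$ does not satisfy $d^2=0$, so Theorem~\ref{GMC: Th} does not apply to it as stated.
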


\begin{proof}
First, the choice of a lift of $A$ over $W(k)$ clearly provides such quas-isomorphism as we can compute $\HP^{cris}_{obj}$ using the periodic cyclic complex of that lift, without any correction term.

To deduce the general case, for any DG algebra $A$ we will construct a splitting of the canonical map $\HP^{cris}_{obj}(A\otimes_{W(k)}^L k, W_2(k))\to \HP^{cris}_{obj}(A, W_2(k))$. We will use the following explicit resolution for the algebra $A$, viewed as an algebra over $W(k)$. Consider the free graded algebra $\ttA=W(k)\{x_i|i\in I\}$ over $W(k)$ and choose further a lift $\ttd:\ttA\to\ttA[1]$ of the derivation $\td$.  This yields a unique derivation $D:\ttA\to\ttA[2]$ such that $pD=\ttd^2$.

Consider the DG algebra $\ttA[\eps]$ whose underlying graded algebra is $\ttA\otimes_{W(k)}W(k)[\eps]$ with $\deg\eps=-1$ and the differential is given by $\ttd+p\partial_{\eps}+\eps D$. Here $\partial_{\eps}$ is the derivation induced by $W(k)[\eps]\to W(k)[1]$. The map $\ttA[\eps]\to A$ induced by the reduction map $\ttA\to A$ and $\eps\mapsto 0$ is a quasi-isomorphism of DG algebras over $W(k)$ (it is surjective and $\frac{1}{p}\eps$ is a contracting homotopy for the kernel).

In other words, $(\ttA[\eps], \ttd+p\partial_{\eps}+\eps D)$ is a lift of the algebra $A\otimes_{W(k)}^L k$ so $\HP_{obj}^{cris}(A\otimes^L_{W(k)}k,W_2(k))$ can be computed using the periodic cyclic complex of this lift: $$\HP(\tA[\eps]/W(k))=(\HH(\tA[\eps])((u)), b+uB+L_{\td}+pL_{\partial_{\eps}}+\eps  L_D)$$ Here $\tA$ denotes $\ttA/p^2$ and $\td$ is the reduction of $\ttd$. Apart from the contraction operators used in the definition of $\HP^{cris}_{obj}$, to construct the desired map we will need additional operations introduced in \cite{g}. Consider the operator $\sigma\{D,D\}$ defined in Section 3 of loc. cit. The operator $\rho\{D_1,D_2\}$ vanishes when $D_1,D_2$ are $1$-cocycles, so Lemma 3.2 of \cite{g} gives, in our particular case, the equality $[b+uB, u\iota_{D}L_D-\sigma\{D,D\}]=2\iota_D^2$.

We now define the map $\HP^{cris}_{obj}(A, W_2(k))\to \HP^{cris}_{obj}(A\otimes_{W(k)}^L k, W_2(k))$ by \begin{multline}\psi:=1+\eps\iota_D+\frac{p}{2}\eps(\sigma\{D,D\}-u\iota_D L_D): (\HH(\tA)((u)), b+uB+L_{\td}+p\iota_D)\to \\ (\HH(\tA[\eps])((u)), b+uB+L_{\td}+pL_{\partial_{\eps}}+\eps L_D)\end{multline}

This map indeed intertwines the differentials: $(1+\eps\iota_{D}+\frac{p}{2}\eps(\sigma\{D,D\}-u\iota_D L_D))(b+uB+L_{\td}+p\iota_D)-(b+uB+L_{\td}+pL_{\partial_{\eps}}+\eps L_D)(1+\eps\iota_{D}+\frac{p}{2}\eps(\sigma\{D,D\}-u\iota_D L_D))=[1+\eps\iota_D+\frac{1}{2}p\eps (\sigma\{D,D\}-u\iota_D L_D),b+uB+L_{\td}]+p\iota_D+p\eps\iota_D^2-\eps L_D-p\iota_D=0$

As remarked above, for the liftable algebra $A\otimes_{W(k)}^Lk$ the identification $\HP^{cris}_{obj}(A\otimes^L_{W(k)}k,W_2(k))\simeq \HP^{cris}(A\otimes^L_{W(k)},W_2(k))$ is immediate and, composing the map $\psi$ with it followed by the canonical map $\HP^{cris}(A\otimes^L_{W(k)}k,W_2(k))\to \HP^{cris}(A,W_2(k))$ we obtain a map $\HP^{cris}_{obj}(A,W_2(k))\to \HP^{cris}(A, W_2(k))$ whose base change to $k$ is the identity morphism, hence this map is an isomorphism.
\end{proof}
\begin{rem}
By Lemma 13.5 in \cite{d} any DG algebra $A$ is quasi-isomorphic to a DG algebra with free underlying graded algebra (furthermore, one can choose such semi-free model in a functorial way) so Theorem \ref{compnaive} provides an explicit complex computing $\HP^{cris}(A,W_2(k))$ for any DG algebra $A$ over $k$.
\end{rem} 

\section{ $\HP^{cris}$ of the category of perfect complexes and the crystalline cohomology} 

Let $X$ be a smooth variety over $k$.  The goal of this section is to establish the Hochschild-Kostant-Rosenberg style relation between $\HP^{cris}(\Perf(X),W(k))$ of the category of perfect complexes on $X$ and crystalline cohomology of $X$. Denote by $\RGcr(X/W(k))$ the crystalline cohomology of $X$ as an object of $D(W(k))$.

\begin{thm}\label{var: main}
 The object $\HP^{cris}(\Perf(X)/W(k))$ admits a canonical complete and exhaustive $\bZ$-indexed filtration $F_{\HKR}^i$ with graded pieces $\gr^i$ quasi-isomorphic to $\RGcr(X/W(k))[2i]$.
\end{thm}

\begin{rem}Given that $\HP^{cris}(\Perf(X)/W(k))$ is equivalent to $\TP(\Perf(X))$, a filtration with properties as in Theorem \ref{var: main} can also be constructed as the motivic filtration from Theorem 1.12 of \cite{bms}. The construction we give here is purely algebraic, and we expect the resulting filtration to coincide with the motivic filtration.
\end{rem}
To perform this comparison, recall the following construction of crystalline cohomology due to Bhargav Bhatt, cf. Corollary 8.6 in \cite{b}. This relationship between crystalline and derived de Rham cohomology motivated the construction of $\HP^{\cris}$ presented here -- this idea has been suggested to us by Akhil Mathew. Li and Liu have recently generalized this relationship to prismatic cohomology, their Theorem 3.5 \cite{liliu} implies Proposition \ref{var: der} below, we include another purely crystalline proof for the sake of completeness.

 For a commutative ring $A$ and a sheaf $\cO$ of commutative $A$-algebras on a topological space $T$ denote by $\dR_{\cO/A}$ the $p$-adic completion of Illusie's derived de Rham cohomology, as defined in \cite{i}, VII.2.1.1.2 or in \cite{b}, Definition 2.1. Denote also by $\widehat{\dR}_{\cO/A}\in D(T, A)$ the $p$-adic completion of the Hodge-complete derived de Rham cohomology \cite{i}, VII.2.1.3.3. If $\cO$ is annihilated by a power of $p$ then $p$-completion in the definition of $\widehat{\dR}_{\cO/A}$ is in fact redundant as graded pieces of the Hodge filtration are all $p$-complete.

\begin{pr}\label{var: der}
The commutative algebra ${\dR}_{k/W(k)}$ is concentrated in degree $0$ and the algebra map $W(k)\to {\dR}_{k/W(k)}$ admits a splitting that gives $W(k)$ the structure of a module over ${\dR}_{k/W(k)}$. There is a quasi-isomorphism of complexes of sheaves of $W(k)$-modules on $X$: \begin{equation}\label{derlocal}Ru_*\cO_{X/W,cris}\simeq {\dR}_{\cO_X/W(k)}\widehat{\otimes}^{L}_{{\dR}_{k/W(k)}} W(k)\end{equation} Left-hand side denotes the (automatically $p$-adically complete) pushforward of the crystalline structure sheaf along the morphism of sites $u:(X/W)_{\cris}\to X_{\zar}$. Moreover, passing to global section induces a quasi-isomorphism \begin{equation}\label{derRG}\RG_{cris}(X, W(k))\simeq \RG(X,{\dR}_{X/W(k)})\widehat{\otimes}^L_{{\dR}_{k/W(k)}}W(k)\end{equation}
\end{pr}

\begin{proof}

By Proposition 8.5 of \cite{b} we have $\dR_{k/W(k)}=\widehat{W(k)[x^{[\cdot]}]}/(x-p)$ ({\it loc. cit.} uses notation $\widehat{\dR_{B/A}}$ in place of ours $\dR_{B/A}$). The desired map $\dR_{k/W(k)}\to W(k)$ is given by $x^{[i]}\mapsto\frac{p^i}{i!}$. 

To prove the isomorphism \ref{derlocal} it is enough to construct a morphism \begin{equation}\label{derdesired}\dR_{\cO_X/W(k)}\to Ru_*\cO_{X/W(k),cris}\end{equation} such that its mod $p$ reduction is identified with the canonical map $\dR_{\cO_X\otimes_{W(k)}^L k/k}\to \dR_{\cO_X/k}\simeq \Omega^{\bullet}_{X/W(k)}\simeq Ru_*\cO_{X/W(k),cris}\otimes_{W(k)}k$. Indeed, precomposing such map with the morphism $\dR_{\cO_X/W(k)}\otimes_{\dR_{k/W(k)}}W(k)\to \dR_{\cO_X/W(k)}$ induced by $W(k)\to \dR_{k/W(k)}$ gives a morphism from RHS to LHS of \ref{derlocal} whose mod $p$ reduction is identified with the composition $\dR_{\cO_X\otimes^L_{W(k)}k/k}\otimes_{\dR_{k\otimes_{W(k)}^L k/k}}k\to \dR_{\cO_X\otimes^L_{W(k)}k/k}\to \dR_{\cO_X/k}$ that is a quasi-isomorphism by Kunneth formula.

The desired morphism is constructed in \cite{b}, Proposition 3.25 in the case of affine $X$. We will now spell out how to globalize this construction. Let $\cC_n$ be the category of triples $(U,T,U\to T)$ where $U$ is an open subscheme of $X$, $T$ is any {\it smooth} scheme over $W_n(k)$ and $U\to T$ is a morphism of $W_n(k)$-schemes. For a fixed open $V\subset X$ denote also by $\cC_n(V)$ the full subcategory of triples $(U,T,U\to T)$ with $U=V$. 

Denote by $\cF_1$ the presheaf with values in $D(W_n(k))$ on $\cC_n$ that assigns to $U\to T$ the de Rham cohomology $\RG_{\dR}(T/W_n(k))$ and let $\cF_2$ be the presheaf that assigns to $U\to T$ crystalline cohomology $\RG_{\cris}(U/W_n(k))$ of $U$. We get a morphism of presheaves $\cF_1\to\cF_2$ given by the composition $\RG_{\dR}(T/W_n(k))\simeq \RG_{\cris}(T/W_n(k))\to \RG_{\cris}(U/W_n(k))$ where the first identification uses smoothness of $T$.

There is a functor $\lambda:\cC_n\to X_{\zar}$ to the small Zariski site of $X$ that sends $U\to T$ to $U$. Denote by $\lambda_{!}:\Fun(\cC_n^{op},D(W_n(k)))\to \Sh(X_{\zar})$ the left Kan extension of presheaves along that functor composed with sheafification. We claim that the inverse limits of morphisms $\lambda_{!}\cF_1\to\lambda_{!}\cF_2$ for varying $n$ give the desired morphism \ref{derdesired}.

 The extension $\lambda_{!}\cF_1$ is identified with $\dR_{\cO_X/W_n(k)}$ by the definition of derived de Rham cohomology via left Kan extension from polynomial (or, equivalently, all smooth) algebras. To identify $\lambda_{!}\cF_2$ with $Ru_*\cO_{X/W_n(k), cris}$ it is enough to show that for a given affine $U$ the canonical map $\mathop{\mathrm{holim}}\limits_{\cC_n(U)}\RG_{cris}(U/W_n(k))\to \RG_{cris}(U/W_n(k))$ is an equivalence. 

Equivalently, we need to show that for any two objects $A,B\in D(W_n(k))$ the canonical map $\Hom(A,B)\to \Hom_{\PSh(\cC_n(U))}(\underline{A},\underline{B})$ is a homotopy equivalence of simplicial sets. Choose an object $(U\xrightarrow{i_0}T_0)\in\cC_n(U)$. Evaluation at that object induces a morphism $ \Hom_{\PSh(\cC_n(U))}(\underline{A},\underline{B})\to \Hom(A, B)$ and it remains to show that the composition $\Hom_{\PSh(\cC_n(U))}(\underline{A},\underline{B})\to \Hom(A, B)\to \Hom_{\PSh(\cC_n(U))}(\underline{A},\underline{B})$ is homotopic to identity. Any endofunctor $\alpha:\cC_n(U)\to\cC_n(U)$ induces a map $\alpha^*:\Hom_{\PSh(\cC_n(U))}(\underline{A},\underline{B})\to \Hom_{\PSh(\cC_n(U))}(\underline{A},\underline{B})$ and a natural transformation $f:\alpha\to\beta$ induces a homotopy from $\beta^*$ to $\alpha^*$. Consider the functor $\alpha_{U\to T_0}(U\xrightarrow{i} T):=(U\xrightarrow{i\times i_0} T\times T_0)$. The projections give natural transformations from $\alpha_{U\to T_0}$ to the identity endofunctor and to the constant endofunctor sending everything to $(U\to T_0)$. Thus, the above composition is homotopic to identity and the homotopy limit of the constant diagram over $\cC_n(U)$ is equivalent to the value of that diagram, as desired. 

This finishes the construction of the isomorphism (\ref{derlocal}), and (\ref{derRG}) follows by quasi-compactness of $X$.
\end{proof}

To utilize the above description of crystalline cohomology we need to know that under some finiteness condition the construction of $\HP^{cris}$ can be performed directly in the category $\Mod_{W(k)^{tS^1}}$.

\begin{lm}\label{finhh}
If  for a DG category $\cC$ there exists an integer $N$ such the Hochschild homology groups $H^i(\HH(\cC/k))$ vanish for $i<N$ then we have a quasi-isomorphism $$\HP^{cris}(\cC, W(k))\simeq \HP(\cC/ W(k))\widehat{\otimes}_{\HP(k/ W(k))}\HP(W(k)/W(k))$$
\end{lm}

\begin{proof}
The functor $\Mod_{W(k)[S^1]}^t\to \Mod_{W(k)^{tS^1}}$ from \ref{GMC: TateInv} is lax monoidal so there is a map 

$$\HP(\cC/ W(k))\widehat{\otimes}_{\HP(k/ W(k))}\HP(W(k)/W(k))\to \HP^{cris}(\cC, W(k))$$ It is enough to show that it is an equivalence after applying the tensor product $-\otimes_{W(k)^{tS^1}}k^{tS^1}$ which comes down to proving that the canonical map \begin{equation}\label{basechange}\HP(\cC/W(k))\otimes_{\HP(k/W(k))}\HP(k/k)\to \HP(\cC/k)\end{equation} is a quasi-isomorphism. For any complex $M$ equipped with $S^1$-action the Tate fixed points object $M^{tS^1}$ admits a $\bZ$-indexed filtration $F^i_{Tate}M^{tS^1}$ with quotients quasi-isomorphic to $M[2i],i\in\bZ$. Moreover, this filtration is exhaustive and $M^{tS^1}$ is complete with respect to it.

 It follows that \ref{basechange} can be upgraded to a map of $\bZ$-filtered objects.  Since the canonical map $\HH(\cC/W(k))\otimes_{\HH(k/W(k))}\HH(k/k)\to \HH(\cC/k)$ is a quasi-isomorphism, the map \ref{basechange} induces quasi-isomorphisms on the graded pieces of the filtrations. To conclude that this map is a quasi-isomorphism it is enough to see that the source is complete. This is the case because for any given $i$ the cohomology $H^i(F^j_{Tate}(\HP(\cC/W(k))\otimes_{\HP(k/W(k))}\HP(k/k)))$ vanishes for all large enough $j$.
\end{proof}


\begin{proof}({\it of Theorem \ref{var: main}})
For any qcqs scheme $Y$ over $W(k)$, Theorem 1.1 of \cite{a} equips $\hHP(Y/W(k))$ with a multiplicative filtration $F_{\HKR}^{*}\hHP(Y/W(k))$ such that $\gr^i_{F_{\HKR}}\simeq \widehat{\dR}_{Y/W(k)}[2i]$. If the derived de Rham complex $\widehat{\dR}_{Y/W(k)}$ happens to be concentrated in degree $0$ then this filtered object is canonically equivalent to $\hHP(Y/W(k))$ equipped with the double-speed Postnikov filtration: $F^i_{\HKR}\hHP(Y/W(k))\simeq \tau^{\leq -2i}\hHP(Y/W(k))$. This observation applies to $Y=\Spec k,\Spec W(k)$. In particular, the map $\hHP(k/W(k))\to \hHP(W(k)/W(k))$ provided by Corollary \ref{GMC: augmentation} can be automatically upgraded to a filtered morphism with respect to the filtrations $F_{\HKR}^{*}$. In what follows we use that $\hHP(X/W(k))$ defined via descent from affine opens (the object studied by \cite{a}) coincides with $\hHP(\Perf(X)/W(k))$, as follows from Corollary 5.2 of \cite{k}. 

This induces a filtration on the completed tensor product $\hHP(X/W(k))\widehat{\otimes}_{\hHP(k/W(k))}\hHP(W(k)/W(k))$. Namely, the simplicial bar construction \begin{equation}\label{var:hpbar}[n]\mapsto \hHP(X/W(k))\otimes_{W(k)}\hHP(k/W(k))^{\otimes n}\otimes_{W(k)}\hHP(W(k)/W(k))\end{equation} can be given the structure of a filtered simplicial $E_{\infty}$-algebra via the usual monoidal structure on the filtered derived category of $W(k)$-modules and the $p$-adic completion of the colimit of the bar construction is a filtered algebra with the underlying algebra $\HP^{cris}(X,W(k))=\hHP(X/W(k))\widehat{\otimes}_{\hHP(k/W(k))}\hHP(W(k)/W(k))$. In the last equality we used Lemma \ref{finhh} for $\cC=\Perf(X)$: we can take $N=-\dim X$ because of the HKR filtration on $\HH(X/k)$. Denote the resulting filtration on $\HP^{cris}(X, W(k))$ by $F_{\HKR}^{i}\HP^{cris}(X,W(k))$ for $i\in\bZ$.

The $d$-th graded piece $\cone(F_{\HKR}^{d+1}\to F_{\HKR}^d)$ is equivalent to the $p$-adic completion of the colimit of the simplicial diagram 
\begin{multline}\label{grbar1}[n]\mapsto \bigoplus\limits_{\substack{i_0,\dots, i_{n+1}\in\bZ \\ i_0+\dots +i_{n+1}=d}}\gr^{i_0}\hHP(X/W(k))\otimes \gr^{i_1}\hHP(k/W(k))\otimes\\
\dots\otimes \gr^{i_n}\hHP(k/W(k))\otimes_{W(k)}\gr^{i_{n+1}}\hHP(W(k)/W(k))\end{multline} 
with the morphisms induced by the maps of the form $\gr^iM_1\otimes \gr^j M_2\to \gr^{i+j}(M_1\otimes M_2)$.

Consider the category $E(\bZ)$ with objects given by the elements of $\bZ$ with exactly one morphism between every two objects. Let $N$ be the free simplicial $W(k)$-module generated by the simplicial nerve of $E(\bZ)$.  Since $\gr^i\hHP(X/W(k))\simeq \widehat{\dR}_{X/W(k)}[2i]$, the simplicial object (\ref{grbar1}) computing $\gr^d \HP^{cris}(X,W(k))$ is equivalent to the tensor product of $N$ with the bar construction computing the tensor product $(\RG(X,\widehat{\dR}_{X/W(k)})\otimes_{\widehat{\dR}_{k/W(k)}}W(k))[2d]$: \begin{equation}\label{grbar2}[n]\mapsto N_n\otimes_{W(k)}(\RG(X,\widehat{\dR}_{X/W(k)})\otimes_{W(k)}\widehat{\dR}_{k/W(k)}^{\otimes_{W(k)} n})[2d]\end{equation} where the identification takes the summand of (\ref{grbar1}) labeled by $i_0,i_1,\dots i_{n+1}$ to the submodule $W(k)\cdot [i_0\to i_0+i_1\to\dots\to i_0+i_1+\dots+i_{n}]\otimes \RG(\widehat{\dR}_{X/W(k)})\otimes\widehat{\dR}_{k/W(k)}^{\otimes_{W(k)} n}[2d]$

Since  $E(\bZ)$ is equivalent to the category with one object and one morphism, $N$ is homotopy equivalent to $W(k)$ and the graded pieces $\gr^d\HP^{cris}(X,W(k))$ are equivalent to $\RG_{cris}(X, W(k))[2d]$, as desired.

Finally, the above computation also shows that the canonical filtered map $\HP^{cris}(X,W(k))/p\simeq \HP(X/k)$ is a filtered quasi-isomorphism where $\HP(X/k)$ is equipped with the Hochschild-Kostant-Rosenberg filtration. In particular, the completeness of the filtration on $\HP^{cris}(X,W(k))$ follows from the corresponding result (Theorem 1.1 of \cite{a}) modulo $p$. The filtration on the non-completed colimit of (\ref{var:hpbar}) is exhaustive and, moreover, for any given $i\in \bZ$ the map $F^j_{\HKR}(\hHP(X/W(k)){\otimes}_{\hHP(k/W(k))}\hHP(W(k)/W(k)))\to \hHP(X/W(k)){\otimes}_{\hHP(k/W(k))}\hHP(W(k)/W(k))$ induces an isomorphism on $i$-th cohomology for all small enough $j$. Hence, the exhaustiveness persists after $p$-adic completion.
\end{proof}

Let us close with a discussion of the relationship between Proposition \ref{var: der} and Theorem 3.5 of \cite{liliu}. The theorem is stronger in that it moreover provides a splitting of the canonical morphism $\dR_{\cO_X/W(k)}\to Ru_*\cO_{X/W,cris}$. This relies on the additional structure presented by prismatic cohomology. Alternatively, if there is a lift of $X$ to a smooth formal scheme $\tX$ over $W(k)$ then such a splitting is given by $Ru_*\cO_{X/W,cris}\simeq \Omega^{\bullet}_{\tX/W(k)}=\dR_{\cO_{\tX}/W(k)}\to \dR_{\cO_X/W(k)}$. One can see that these splittings are in fact the same:

\begin{lm}
For an affine scheme $X=\Spec R$ and its lift $\tX=\Spf \tR$ over $W(k)$ there is a canonical identification

\begin{equation}
\begin{tikzcd}
{\Prism_{R/W(k)}^{(1)}} \arrow[rd] \arrow[dd,"\sim"] &    \\
              & {\dR_{R/W(k)}} \\
{\dR_{\tR/W(k)}} \arrow[ru] &   
\end{tikzcd}
\end{equation}

Here the vertical map is provided by the crystalline-de Rham comparison and the two oblique maps are the sections whose construction is outlined above.
\end{lm}

\begin{proof} On the one hand, there is a commutative diagram

\[
\begin{tikzcd}
\Prism^{(1)}_{R/W(k)}\arrow[r] & \dR_{R/W}\\
\Prism^{(1)}_{\tR/W(k)[[u]]}\arrow[r]\arrow[u] & \dR_{\tR/W(k)[[u]]}\arrow[u]
\end{tikzcd}
\]
that manifests functoriality of the identification constructed in Theorem 3.5 of \cite{liliu}. Combined with Corollary 3.17 of loc. cit. this provides us with the following commutative diagram 

\[
\begin{tikzcd}
                         & {\Prism^{(1)}_{R/W(k)}=\RG_{\cris}(R/W(k))} \arrow[rd] &    \\
{\Prism^{(1)}_{\tR/W(k)[[u]]}} \arrow[ru, "\text{crystalline specialization}"] \arrow[rd, "\text{de Rham specialization}"'] &               & {\dR_{R/W(k)}} \\
                         & {\dR_{\tR/W(k)}} \arrow[ru] &   
\end{tikzcd}
\]

Here the two maps on the right are the ones that we are looking to identify. This diagram allows us to do so because, by Corollary 3.7 of \cite{liliu}, both specialization maps factor through $\RG_{cris}(\tR/S)$ where $S$ is the divided power envelope of $(u-p)$ in $W(k)[[u]]$ and, by the base change for crystalline cohomology, there is a canonical trivialization of this module: $\RG_{cris}(\tR/S)\simeq \RG_{cris}(\tR/W(k))\otimes_{W(k)}S$. 
\end{proof}

The noncommutative situation is similar to the extent that a map $\HP^{cris}(\cC, W(k))\to \hHP(\cC/W(k))$ can be constructed either using a lift $\widetilde{\cC}$ (\ref{liftsplitting: intro}), if it exists, or using topological periodic cyclic homology (\ref{TPsplitting: intro}). However, we no longer know if these maps can be identified (in particular, we do not know whether different lifts might lead to different splittings).


\begin{thebibliography}{BFM}

\bibitem[A]{a} B. Antieau, {\it Periodic cyclic homology and derived de Rham cohomology}

\bibitem[AMN]{amn} B. Antieau, A. Mathew, Th. Nikolaus, {\it On the Blumberg-Mandell K\"unneth theorem for TP},  	arXiv:1710.05658

\bibitem[B]{b} B. Bhatt, {\it  p-adic derived de Rham cohomology}, https://arxiv.org/abs/1204.6560v1

\bibitem[BMS]{bms} B. Bhatt, M. Morrow, P. Scholze, {\it Topological Hochschild homology and integral p-adic Hodge theory},  Publ. Math. Inst. Hautes Études Sci. 129 (2019), 199–310.

\bibitem[DGT]{dgt} Yu. Daletsky, I. Gelfand and B. Tsygan, {\it On a variant of non-commutative geometry,} Soviet
Math. Dokl. 40 (1990), 2, 422--426.

\bibitem[D]{d} V. Drinfeld, {\it DG quotients of DG categories}, J. of Algebra, vol 272, Issue 2 (2004), 643--691.

\bibitem[G]{g} E. Getzler, {\it Cartan homotopy formulas and the Gauss-Manin connection in cyclic homology},
Quantum deformations of algebras and their representations (Ramat-Gan, 1991/92; Rehovot,
1991/92), 65--78, Israel Conference Proceedings, 7, Bar-Ilan University, Ramat-Gan, 1993.

\bibitem[H]{h} L. Hesselholt, {\it Topological Hochschild homology and the Hasse-Weil zeta function. An alpine bouquet of algebraic topology,}
Contemp. Math., 708, Amer. Math. Soc., Providence, RI, 2018, 157--180. 

\bibitem[I]{i} L. Illusie, {\it Complexe cotangent et déformations. II.}

\bibitem[K1]{k1} D. Kaledin, {\it Cyclic Homology with Coefficients,} Algebra, Arithmetic, and Geometry: in Honor of Yu. I. Manin, vol. II, pp. 23–47. Progress in Mathematics, vol 270. Birkhäuser Inc., Boston (2009)

\bibitem[K2]{ka} D. Kaledin, {\it Spectral sequences for cyclic homology,} in ``Algebra, Geometry and Physics in the 21st Century (Kontsevich Festschrift)'',
 Birkh{\" a}user , Progress in Math. vol 324  (2017), 99--129. 

\bibitem[K]{k} B. Keller, {\it On the Cyclic Homology of Ringed Spaces and Schemes},  Doc. Math. 3 (1998), 231–259.

\bibitem[KN]{kn} A. Krause, T. Nikolaus, {\it Lectures on topological Hochschild homology and cyclotomic spectra}, available at https://www.uni-muenster.de/IVV5WS/WebHop/user/nikolaus/papers.html

\bibitem[LL]{liliu} S. Li, T. Liu, {\it Comparison of prismatic cohomology and derived de Rham cohomology}, https://arxiv.org/abs/2012.14064v2

\bibitem[L]{l} J. Loday, {\it Cyclic homology,}  Grundlehren der Mathematischen Wissenschaften [Fundamental Principles of Mathematical Sciences], vol. 301. Springer, Berlin (1992)

 
\bibitem[M]{m} A. Mathew,  {\it  Kaledin's degeneration theorem and topological Hochschild homology,} arXiv:1710.09045
 
\bibitem[NS]{ns} T. Nikolaus, P. Scholze,  {\it On topological cyclic homology}, Acta Math. 221 (2018), no. 2, 203--409.
 
\bibitem[PVV]{pvv}  A. Petrov, D. Vaintrob, V. Vologodsky  {\it  The Gauss-Manin connection on the periodic cyclic homology,} V. Sel. Math. New Ser. (2018) 24: 531

 \bibitem[W]{w} C. Weibel,  {\it An introduction to homological algebra},  Cambridge Studies in Advanced Mathematics, 38, Cambridge University Press, Cambridge, 1994.
\end{thebibliography}
\end{document}